\documentclass[12pt]{amsart}
\usepackage{amsfonts}

\textwidth 160 mm
\textheight 230 mm
\hoffset=-15 mm
\voffset=-5 mm

\newtheorem{theorem}{Theorem}[section]
\newtheorem{corollary}[theorem]{Corollary}

\newtheorem{lemma}[theorem]{Lemma}
\newtheorem{proposition}[theorem]{Proposition}
\newtheorem{example}[theorem]{Example}

\newtheorem{remark}[theorem]{Remark}

\def\Cc{\hbox{\sf C\kern -.47em {\raise .48ex \hbox{$\scriptscriptstyle |$}}
   \kern-.5em {\raise .48ex \hbox{$\scriptscriptstyle |$}} }}

\newcommand{\si}{\sigma}

\newcommand{\be}{\begin{equation}}
\newcommand{\ee}{\end{equation}}

 % complex
\newcommand{\NN}{\mathbb{N}}

\newcommand{\e}{\varepsilon}
\newcommand{\al}{\alpha}

\def\si{\sigma}

\begin{document}

\baselineskip 5.8mm

\title[On the Bonsall cone spectral radius and the approximate point spectrum ]
{On the Bonsall cone spectral radius and the approximate point spectrum}

\author{Vladimir M\"uller, Aljo\v{s}a Peperko}
\date{\thanks{} \today}

\begin{abstract}
\baselineskip 7mm

We study the Bonsall cone spectral radius and the approximate point spectrum
of  (in general non-linear) positively homogeneous, bounded and supremum preserving maps, defined on a max-cone in a given 
normed vector lattice. We prove that the Bonsall cone spectral radius of such maps  is always included in its approximate point spectrum. Moreover, the approximate point spectrum always contains a (possibly trivial) interval. Our results apply to a large class of (nonlinear) max-type operators.
 %Our results apply to a large class of (nonlinear) max-type operators appearing in problems arising in certain differential and difference equations,  mathematical physics and elsewhere, in particular to max type operators considered by Mallet-Parret and Nussbaum (2003).

We also generalize a known result that the spectral radius of a positive (linear) operator on a Banach lattice is contained in the approximate point spectrum. Under additional generalized compactness type assumptions our results imply Krein-Rutman type results. 
%By applying the techniques of proofs of the above result, we are also able to prove a generalization (to normed spaces and convex cones) of an (implicitly) known result that the spectral radius of a positive (linear) operator on a Banach lattice is contained in the approximate %point spectrum.  

\end{abstract}

\maketitle

\noindent
{\it Math. Subj.  Classification (2010)}: 47H07, 47J10, 47H10, 47H08, 47B65, 47A10. 
 %15A18, 15A80, 15A60, 15B48, 47A60. , 47B34,
\\
%  15A60, 15A80 \\
{\it Key words}: Bonsall's cone spectral radius; local spectral radii; approximate point spectrum;  supremum preserving maps; max kernel operators; normed vector lattices; normed spaces; cones
%Generalized spectral radius;
%Joint spectral radius; Berger-Wang formula; Maximum cycle geometric mean; Max algebra; Hadamard-Schur product; Continuity; Haussdorf distance; Trace; Max-trace. 
%non-negative matrices; max algebra; eigenvalues;  distinguished eigenvalues; spectral mapping theorem in max algebra; power series in max algebra; continuity; Bonsall's cone spectral radius; approximate point spectrum; max-plus kernel operators; Bellman operators; tropical %algebra
\\

\section{Introduction}
Max-type operators (and corresponding max-plus type operators and their tropical versions known also as Bellman operators) arise in a large field of problems from the theory of differential and difference equations,  mathematical physics, optimal control problems, discrete mathematics, turnpike theory, mathematical economics, mathematical biology,  games and controlled Markov processes, generalized solutions of the Hamilton-Jacobi-Bellman differential equations, continuously observed and controlled quantum systems, discrete and continuous dynamical systems, ... (see e.g. \cite{MN02}, \cite{KM97},  \cite{LM05}, \cite{LMS01},  \cite{AGN} and the references cited there). The eigenproblem of such operators obtained so far substantial attention due to its applicability in the above mentioned problems (see e.g. \cite{MN02}, \cite{KM97}, \cite{AGN}, \cite{AGW04} \cite{LN12}, \cite{AG13}, \cite{MN10}, \cite{BCOQ92}, \cite{B98}, \cite{MP15}, \cite{BGC-G09}, \cite{MP12}, \cite{S07} and the references cited there). However, there seems to be a lack of more general treatment of spectral theory for such operators, eventhough the spectral theory for nonlinear operators on Banach spaces is already quite well developed
(see e.g. \cite{APV04}, \cite{APV00}, \cite{AGV02}, \cite{F97}, \cite{FV75}, \cite{FMV78}, \cite{GM77},
% \cite{GV01}, 
\cite{SV00} and the references cited there). One of the reasons for this might lie in the fact that these operators behave nicely on a suitable subcone (or subsemimodule), but less nicely on the whole (Banach) space. Therefore it appears, that it is not trivial to directly apply this known non-linear spectral theory to obtain satisfactory information on a restriction to a given cone of a max-type operator.   
 The Bonsall cone spectral radius  plays the role of the spectral radius in this theory (see e.g.  \cite{MN02}, \cite{MN10}, \cite{AGN}, \cite{LN11}, \cite{Gr15}, \cite{MP15} and the references cited there). 

In this article we study the Bonsall cone spectral radius  and the approximate point spectrum
of  positively homogeneous, bounded and supremum preserving maps, defined on a max-cone in a given 
normed vector lattice. We prove that the Bonsall cone spectral radius of such maps  is always included in its approximate point spectrum. Moreover, the approximate point spectrum always contains a (possibly trivial) interval. Our results apply to a large class of max-type operators (and their isomorphic versions). Our main interests are  results on suitable cones in Banach spaces and Banach lattices. However, since the completeness of the norm does not simplify our proofs, % main proofs, 
we state %most of 
our results in the setting of normed spaces and normed 
vector lattices. Under suitable generalized compactness type assumptions our results imply Krein-Rutman type results. 

The paper is organized as follows. In Section 2 we recall basic definitions and facts that we will need in our proofs. In Section 3 we prove our results in the setting of max-cones in normed vector lattices, while in Section 4 we apply our techniques in the setting of normal convex cones in normed spaces. The main results of Section 3 are Theorem \ref{main_lattice}  and its generalization Theorem \ref{appl_to_matrices} and the main result of Section 4 are Theorems \ref{main_normal} and  \ref{appl_to_matrices_lin}.

 %Our results apply to a large class of (nonlinear) max-type operators appearing in problems arising in certain differential and difference equations,  mathematical physics and elsewhere, in particular to max type operators considered by Mallet-Parret and Nussbaum (2003).
%We also generalize a known result that the spectral radius of a positive (linear) operator on a Banach lattice is contained in the approximate point spectrum. Under additional compactness type assumptions our results imply Krein-Rutman type results. 

\section{Preliminaries}

A subset $C$ of a real vector space $X$ is called a cone (with vertex 0) if 
$tC \subset C$ for all $t \ge 0$, where $tC =\{tx : x \in C \} $. A map $T: C \to C$ is called positively homogeneous
(of degree 1) if $T(tx) = tT(x)$ for all $t \ge 0$ and $x \in C$. We say that the cone
$C$ is pointed if $C \cap (- C) = \{0\}$. 

A convex pointed cone $C$ of $X$ induces on $X$ a
partial ordering $\le$, which is defined by $x  \le y$ if and only if  $y - x \in C$. In this case $C$ is denoted by $X_+$ and $X$ is called an ordered vector space. If, in addition, $X$ is a normed space then it is called an ordered normed space.  If, in addition, the norm is complete, then  $X$ is called an ordered Banach space.
%If the choice of C is obvious, we shall write 6 instead of 6C. When X is a topological vector space,

A convex cone $C$ of $X$ is  called a wedge. A wedge induces on $X$  (by the above relation) a vector preordering $\le$ (which is reflexive, transitive, but not necessary antisymmetric).

We say that the cone $C$ is proper if it is closed, convex and pointed. A cone $C$ of a normed space $X$  is called normal if there exists a constant $M$ such
that $\|x\| \le M \|y\|$ whenever $ x \le y$, $x,y\in C$. A convex and pointed  cone $C=X_+$ of an ordered normed space $X$ is normal if and only if there exists an equivalent monotone norm $||| \cdot|||$ on $X$, i.e.,  
$|||x||| \le |||y|||$ whenever $0 \le x \le y$ (see e.g. \cite[Theorem 2.38]{AT07}).
Every proper cone $C$ in a finite dimensional
Banach space  is necessarily normal.

If $X$ is a normed linear space, then a cone $C$ in $X$ is said to be complete if it is a complete metric space in the topology induced by $X$. In the case when $X$ is a Banach space this is equivalent to $C$ being closed in $X$.

If $X$ is an ordered vector space, then a cone $C\subset X_+$ is called a max-cone if for every pair  $x,y\in C$ there exists a supremum $x\vee y$  (least upper bound) in $C$. 
 %$ x\vee y\in C$ whenever $x,y\in C$.  
We consider here on $C$ an order inherited from $X_+$.
%\Rightarrow x\vee y\in C$, $tC\subset C\quad(t\ge 0$).
A map $T:C\to C$ preserves finite suprema on $C$ if $T(x\vee y)=Tx\vee Ty\quad(x,y\in C$). If $T:C\to C$ preserves finite suprema, then it is monotone (order preserving) on $C$, i.e., $Tx \le Ty$ whenever $x\le y$, $x,y \in C$  .

An ordered vector space  $X$ is called a vector lattice (or a Riesz space) if every two vectors $x,y \in X$ have a supremum and  infimum (greatest lower bound) in $X$. A positive cone $X_+$ of a vector lattice $X$ is called a lattice cone.

Note that by \cite[Corollary 1.18]{AT07} a pointed convex cone $C=X_+$ of an ordered vector space $X$ is a lattice cone for the vector subspace $C-C$ generated by $C$ in $X$, if and only if  $C$ is a max cone (in this case a supremum of $x, y\in C$ exists in $C$ if only if it exists in $X$; and suprema coincide). Moreover, if $x,y,z,u \in C$, then
$$(x-y)\vee (z-u)= (x+u) \vee (y+z) - (y+u)$$  
holds in $C-C$.

If $X$ is a vector lattice, then the absolute value of $ x\in X$ is defined by $|x|= x \vee (-x)$. A vector lattice is called a normed vector lattice (a normed Riesz space) if $|x| \le |y|$ implies $\|x\| \le \|y\|$. A complete normed vector lattice is called a Banach lattice. A positive cone $X_+$ of a normed vector lattice $X$ is  proper and normal. 

In a vector lattice $X$  the following Birkhoff's inequality for $x_1,\dots,x_n,y_1,\dots,y_n\in X$ holds:
\be
|\bigvee_{j=1}^n x_j-\bigvee_{j=1}^n y_j| \le \sum_{j=1}^n |x_j-y_j|.
\label{Birk_inq}
\ee
For the theory of  vector lattices, Banach lattices, cones, wedges, operators on cones and applications e.g. in financial mathematics we refer the reader 
to 
 \cite{AA02}, \cite{AT07}, %\cite{Za83}, %\cite{Me91},
 \cite{AB85}, \cite{W99}, \cite{ABB90}, \cite{LT96}, \cite{JM14}, \cite{AB06}  %\cite{BS88}
 and the references cited there. 

Let $X$ be a normed space and $C \subset X$ a non-zero cone. Let $T:C \to C$ be positively homogeneous and bounded, i.e., 
$$\|T\|:=\sup \left \{\frac{\|Tx\|}{\|x\|} :x\in C, x\neq 0\right \}<\infty.$$
It is easy to see that
$\|T\|=\sup\{\|Tx\|:x\in C, \|x\|\le 1 \}$ and $\|T^{m+n}\|\le \|T^m\|\cdot \|T^n\|$ for all $m,n\in\NN$. It is well known that this implies that the limit $\lim_{n\to\infty}\|T^n\|^{1/n}$ exists and is equal to $\inf_n \|T^n\|^{1/n}$. The limit $r(T):=\lim_{n\to\infty}\|T^n\|^{1/n}$ is called the Bonsall cone spectral radius of $T$. The approximate point spectrum $\sigma_{ap}(T)$ of $T$ is defined as the set of all $s\ge 0$ such that $\inf\{\|Tx-sx\|:x\in C,\|x\|=1\}=0$.

For $x\in C$ define the local cone spectral radius by $r_x(T):=\limsup_{n\to\infty}\|T^nx\|^{1/n}$. Clearly $r_x(T)\le r(T)$ for all $x\in C$. It is known that the equality 
\be 
\sup\{r_x(T):x\in C\}=r(T)
\label{eq}
\ee
is not valid in general. In \cite{MN02} there is an example of a proper cone $C$ in a Banach space $X$ and  a positively homogeneous and continuous (hence bounded)  map $T : C \to C$ such that $\sup\{r_x(T):x\in C\}< r(T)$. A recent example of such kind, where  $T$ is in addition monotone, is obtained in \cite[Example 3.1]{Gr15}. However, if $C$ is a normal, complete, convex and pointed cone in a normed space $X$  and  $T : C \to C$ is  positively homogeneous, monotone  and continuous, then 
\cite[Theorem 3.3]{MN10}, 
 \cite[Theorem 2.2]{MN02} and \cite[Theorem 2.1]{Gr15} ensure that (\ref{eq}) is valid.

%The approximate point spectrum $\sigma_{ap}(T)$ of $T$ is defined as the set of all $s\ge 0$ such that $\inf\{\|Tx-sx\|:x\in C,\|x\|=1\}=0$.

If $X$ is a Banach lattice,  $C\subset X_+$  a max-cone and $T:C\to C$ a mapping which is bounded, positively homogeneous and preserves finite suprema, then the equality (\ref{eq}) is not necessary valid as the following example shows.
\begin{example}
{\rm Let $X=l^2$ 
%be the Hilbert space 
with a standard orthonormal basis $\{e_1,e_2,\dots \}$. Let $C=\{\bigvee_{j=1}^n\al_je_j:n\ge 1, \al_1,\dots\al_n\ge 1\}$.
Define $T:C\to C$ by $T(\bigvee_{j=1}^n\al_je_j)=\bigvee _{j=1}^{n-1}\al_{j+1}e_j$ (the backward shift). It is easy to see that $r(T)=1$ and $r_x(T)=0$ for each $x\in C$. It also holds that $\sigma_{ap}(T)=[0,1]$.
}
\label{example}
\end{example}
Some additional examples of maps for which (\ref{eq}) is not valid can be found in \cite{Gr15}.

Let $C$ be a cone in a normed space $X$ and $T:C \to C$. Then $T$ is called  Lipschitz if there exists $L >0$ such that $\|Tx-Ty \|\le L\|x-y\| $ for all $x,y \in C$. %If $C$ is a convex cone, then $T$ is called subadditive if $T(x+y) \le Tx +Ty$ for all $x,y\in C$.

\section{Results on max-cones in normed vector lattices}

As noted in Example \ref{example} it may happen that $\sup\{r_x(T):x\in C\}<r(T)$ in the case when $C$ is a max-cone. We will prove in Theorem \ref{main_lattice} that the Bonsall cone spectral radius of  a  bounded, positively homogeneous, finite suprema preserving  mapping $T:C\to C$, defined on a max-cone $C$ in a normed vector lattice, is contained in its approximate point spectrum. Moreover, we will show that the interval 
 $[\sup\{r_x(T):x\in C\} , r(T)]$ is included in  $\sigma_{ap}(T)$ for such maps $T$.  

%Let $X$ be a Banach lattice and  $C\subset X_+$ a non-zero max-cone.
% A cone $C\subset X_+$ is called a max-cone be a non-zero max-cone (i.e., $x,y\in C\Rightarrow x\vee y\in C$, $tC\subset C\quad(t\ge 0$).

%et $T:C\to C$ be a mapping which is bounded (i.e., $\|T\|:=\sup\{\|Tx\|:x\in C, \|x\|\le 1\}<\infty$), positively homogeneous (i.e., $T(tx)=tT(x)$ for all $x\in C, t\ge 0$) and preserves %finite suprema ($T(x\vee y)=Tx\vee Ty\quad(x,y\in C$)).
%\bigskip

%It is easy to see that $\|T^{m+n}\|\le \|T^m\|\cdot \|T^n\|$ for all $m,n\in\NN$. It is well known that this implies that the limit $\lim_{n\to\infty}\|T^n\|^{1/n}$ exists and is equal to %$\inf_n \|T^n\|^{1/n}$. Let $r(T):=\lim_{n\to\infty}\|T^n\|^{1/n}$ be the spectral radius of $T$.

%Similarly, for $x\in C$ define the local spectral radius by $r_x(T):=\limsup_{n\to\infty}\|T^nx\|^{1/n}$. Clearly $r_x(T)\le r(T)$ for all $x\in C$.
%\bigskip

%The approximate point spectrum $\sigma_{ap}(T)$ of $T$ is defined as the set of all $s\ge 0$ such that $\inf\{\|Tx-sx\|:x\in C,\|x\|=1\}=0$.
%\bigskip

We shall need the following three lemmas.

\begin{lemma}
\label{Birk}
Let $X$ be a normed vector lattice and let $x_1,\dots,x_n,y_1,\dots,y_n\in X$. Then
$$
\Bigl\|\bigvee_{j=1}^n x_j-\bigvee_{j=1}^n y_j\Bigr\|\le
\sum_{j=1}^n \|x_j-y_j\|.
$$
%\medskip
\end{lemma}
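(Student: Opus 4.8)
The plan is to prove this norm inequality as a direct consequence of the pointwise Birkhoff inequality (\ref{Birk_inq}) already stated in the preliminaries, combined with the defining monotonicity property of the lattice norm.

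First I would recall the two ingredients. The vector-lattice inequality (\ref{Birk_inq}) asserts that
$$
\Bigl|\bigvee_{j=1}^n x_j-\bigvee_{j=1}^n y_j\Bigr| \le \sum_{j=1}^n |x_j-y_j|
$$
holds in $X$. Since $X$ is a normed vector lattice, its norm is a lattice norm: $|u| \le |v|$ implies $\|u\| \le \|v\|$. Applying this monotonicity with $u = \bigvee_{j=1}^n x_j-\bigvee_{j=1}^n y_j$ and $v = \sum_{j=1}^n |x_j-y_j|$, and observing that $|u|$ and $|v|$ here equal the expressions inside the absolute values on the right (as $\sum_{j=1}^n |x_j-y_j| \ge 0$, so it equals its own absolute value), I obtain
$$
\Bigl\|\bigvee_{j=1}^n x_j-\bigvee_{j=1}^n y_j\Bigr\| = \bigl\||u|\bigr\| \le \Bigl\|\sum_{j=1}^n |x_j-y_j|\Bigr\|.
$$

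To finish, I would bound the right-hand side by the triangle inequality together with the lattice-norm property $\bigl\||w|\bigr\| = \|w\|$, which gives $\bigl\||x_j-y_j|\bigr\| = \|x_j-y_j\|$ for each $j$. Thus
$$
\Bigl\|\sum_{j=1}^n |x_j-y_j|\Bigr\| \le \sum_{j=1}^n \bigl\||x_j-y_j|\bigr\| = \sum_{j=1}^n \|x_j-y_j\|,
$$
and chaining the two displays yields the claimed inequality.

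There is no real obstacle here; the statement is essentially the ``normed'' shadow of the already-available pointwise Birkhoff inequality. The only point requiring a moment of care is the correct use of the lattice-norm axiom: one must apply $|u|\le|v| \Rightarrow \|u\|\le\|v\|$ to the absolute values rather than to $u,v$ directly, and use that the nonnegative right-hand vector coincides with its own modulus so that the hypothesis of the axiom is literally met. Everything else is the triangle inequality.
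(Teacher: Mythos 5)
Your proof is correct and follows essentially the same route as the paper's: apply the pointwise Birkhoff inequality (\ref{Birk_inq}), pass to norms via the lattice-norm monotonicity axiom (noting that the nonnegative right-hand side equals its own modulus), and finish with the triangle inequality together with $\|\,|w|\,\|=\|w\|$. No gaps; your extra care about applying the axiom to the absolute values is exactly the point the paper's proof also implicitly uses.
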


\begin{proof}
By (\ref{Birk_inq}) we have
$$
\Bigl\|\bigvee_{j=1}^n x_j-\bigvee_{j=1}^n y_j\Bigr\|=
\Bigl\|\, |\bigvee_{j=1}^n x_j-\bigvee_{j=1}^n y_j|\,\Bigr\|
\le
%\|\bigvee_{j=1}^n |x_j-y_j|\, \|\le
\Bigl\|\sum_{j=1}^n |x_j-y_j|\, \Bigr\|
$$
$$
\le 
\sum_{j=1}^n \|\,|x_j-y_j|\,\|=
\sum_{j=1}^n \|x_j-y_j\|,
$$
which completes the proof.
\end{proof}

\begin{lemma} Let $X$ be a vector lattice and $x_j, y_j \in X$ %such that $x_j \ge y_j \ge 0$ 
for $j= 1, \ldots, n$. Then 
\be
\bigvee_{j=1}^n x_j - \bigvee_{j=1}^n y_j  \le \bigvee_{j=1}^n (x_j -y_j) .
\label{supineq}
\ee
If, in addition, $X$ is a normed vector lattice and  $x_j \ge y_j \ge 0$ 
for $j= 1, \ldots, n$, then 
\be
\|\bigvee_{j=1}^n x_j - \bigvee_{j=1}^n y_j \| \le \|\bigvee_{j=1}^n (x_j -y_j) \|.
\label{supineq2}
\ee
\label{sup}
\end{lemma}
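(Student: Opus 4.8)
The plan is to establish the order inequality (\ref{supineq}) first by a direct lattice-theoretic argument, and then to deduce the norm inequality (\ref{supineq2}) from it by invoking the monotonicity of the lattice norm on the positive cone.

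For (\ref{supineq}), I would fix an index $k\in\{1,\dots,n\}$ and use the decomposition $x_k=(x_k-y_k)+y_k$. Since $x_k-y_k\le\bigvee_{j=1}^n(x_j-y_j)$ and $y_k\le\bigvee_{j=1}^n y_j$, translation invariance of the order in a vector lattice gives $x_k\le\bigvee_{j=1}^n(x_j-y_j)+\bigvee_{j=1}^n y_j$. As the right-hand side does not depend on $k$, it is an upper bound for all the $x_k$, so taking the supremum over $k$ yields $\bigvee_{j=1}^n x_j\le\bigvee_{j=1}^n(x_j-y_j)+\bigvee_{j=1}^n y_j$; subtracting $\bigvee_{j=1}^n y_j$ from both sides gives exactly (\ref{supineq}).

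For (\ref{supineq2}), I would first note that the hypothesis $x_j\ge y_j\ge 0$ forces both quantities compared in (\ref{supineq}) to be positive: $x_j\ge y_j$ gives $\bigvee_{j=1}^n x_j\ge\bigvee_{j=1}^n y_j$, so the left-hand side of (\ref{supineq}) is $\ge 0$, while $x_j-y_j\ge 0$ gives $\bigvee_{j=1}^n(x_j-y_j)\ge 0$. Thus (\ref{supineq}) reads $0\le\bigvee_{j=1}^n x_j-\bigvee_{j=1}^n y_j\le\bigvee_{j=1}^n(x_j-y_j)$. Since for positive elements $a\le b$ one has $|a|=a\le b=|b|$, the defining property of a normed vector lattice yields $\|a\|\le\|b\|$; applying this to the above chain gives (\ref{supineq2}).

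I do not expect a genuine obstacle here: the whole content of the first part is the decomposition trick $x_k=(x_k-y_k)+y_k$, which reduces matters to monotonicity of the supremum together with translation invariance of the order. The only point requiring slight care is verifying that both sides of (\ref{supineq2}) lie in the positive cone, so that norm monotonicity may be applied; this is precisely where the extra hypothesis $x_j\ge y_j\ge 0$ is needed, and it is otherwise unused.
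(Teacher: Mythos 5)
Your proof is correct and follows essentially the same route as the paper: the key step for (\ref{supineq}) is the same decomposition $x_j=(x_j-y_j)+y_j$ combined with subadditivity of the supremum (which the paper writes in one line and you justify in detail), and (\ref{supineq2}) is deduced exactly as in the paper from $\bigvee_{j=1}^n x_j\ge\bigvee_{j=1}^n y_j$ together with monotonicity of the lattice norm on positive elements. Your write-up is merely more explicit about why both sides of (\ref{supineq2}) are positive, which the paper leaves implicit.
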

\begin{proof} We have
$$\bigvee_{j=1}^n x_j    = \bigvee_{j=1}^n (x_j -y_j +y_j) \le  \bigvee_{j=1}^n (x_j -y_j) + \bigvee_{j=1}^n y_j.$$
which proves (\ref{supineq}).
If  $X$ is a normed vector lattice and  $x_j \ge y_j \ge 0$ for $j= 1, \ldots, n$, then $\bigvee_{j=1}^n x_j  \ge \bigvee_{j=1}^n y_j $ and this implies (\ref{supineq2}).
\end{proof}

\begin{lemma} Let $X$ be a normed space and let $C\subset X$ be a non-zero cone.
%satisfy: $tx \in C$ if $x\in C$ and $t\ge 0$.
 If $T:C \to C$ is positively homogeneous and Lipschitz, then $r(T) \ge t$ for all $t \in \sigma _{ap} (T)$. 
\label{spdominate}
\end{lemma}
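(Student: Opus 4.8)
The plan is to prove the inclusion $\sigma_{ap}(T)\subseteq[0,r(T)]$ directly from the definitions. Fix $t\in\sigma_{ap}(T)$; by definition there is a sequence $(x_n)\subset C$ with $\|x_n\|=1$ and $\varepsilon_n:=\|Tx_n-tx_n\|\to 0$. The goal is to show that for every fixed $k\in\NN$ one has $\|T^k\|\ge t^k$; since $r(T)=\inf_k\|T^k\|^{1/k}$, taking $k$-th roots and then the infimum over $k$ yields $r(T)\ge t$, as desired.

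The heart of the argument is a propagation estimate controlling how the smallness of $\|Tx-tx\|$ spreads under iteration. Writing $L$ for the Lipschitz constant of $T$, I would first prove by induction on $k$ that
$$\|T^kx-t\,T^{k-1}x\|\le L^{k-1}\|Tx-tx\|\qquad(x\in C,\ k\ge 1).$$
The base case $k=1$ is trivial. For the inductive step the key trick is to use positive homogeneity to move the scalar $t$ inside $T$: since $t\,T^{k-1}x=t\,T(T^{k-2}x)=T(t\,T^{k-2}x)$, the Lipschitz bound gives $\|T^kx-t\,T^{k-1}x\|=\|T(T^{k-1}x)-T(t\,T^{k-2}x)\|\le L\,\|T^{k-1}x-t\,T^{k-2}x\|$, and the induction hypothesis closes the step. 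With this in hand I telescope
$$T^kx-t^kx=\sum_{j=1}^k t^{k-j}\bigl(T^jx-t\,T^{j-1}x\bigr),$$
so that $\|T^kx-t^kx\|\le\|Tx-tx\|\sum_{i=0}^{k-1}t^{k-1-i}L^{i}$.

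Applying this with $x=x_n$ and $k$ fixed, the right-hand side equals $\varepsilon_n\sum_{i=0}^{k-1}t^{k-1-i}L^{i}\to 0$ as $n\to\infty$, because $k$ is held fixed while only $\varepsilon_n$ tends to $0$. Hence $\|T^kx_n\|\ge t^k\|x_n\|-\|T^kx_n-t^kx_n\|=t^k-\|T^kx_n-t^kx_n\|\to t^k$, and since $\|T^kx_n\|\le\|T^k\|\,\|x_n\|=\|T^k\|$ we obtain $\|T^k\|\ge t^k$ for every $k$, which finishes the proof.

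I expect the main obstacle to be precisely the propagation estimate: because $T$ is nonlinear, $T^kx-t^kx$ cannot be expanded as in the linear case, and one must control the residual through each application of $T$. The essential observation is that positive homogeneity lets the eigenvalue-candidate $t$ commute with $T$, while the Lipschitz property limits the amplification of the error to a factor $L$ per step; the resulting constant $L^{k-1}$ is harmless, since $k$ is frozen while $\varepsilon_n\to 0$.
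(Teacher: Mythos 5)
Your proof is correct and is essentially the paper's own argument: both rest on using positive homogeneity to commute $t$ past $T$ and the Lipschitz bound to control error amplification by $L$ per iteration, yielding $\|T^k x_n - t^k x_n\|\le \varepsilon_n\sum_{i=0}^{k-1}t^{k-1-i}L^i\to 0$ and hence $\|T^k\|\ge t^k$ for every fixed $k$. The only difference is cosmetic: the paper runs the induction directly on the full defect $\|T^j x_k - t^j x_k\|$ (a two-term splitting per step), while you bound the one-step defects and telescope, which when unrolled gives the identical estimate.
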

\begin{proof}  Since $T(0)=0$ and $T$ is Lipschitz it follows that $T$ is also bounded and so $r(T)$ is well defined.
If $t \in \si _{ap}(T)$, then there exists   
a sequence $(x_k)$ of unit vectors such that 
$\lim_{k\to\infty}\|Tx_k-tx_k\|=0.$ By induction it follows that
$\lim_{k\to\infty}\|T^j x_k-t^jx_k\|=0$ for all $j\in \NN$.
Indeed, 
$$\|T^j  x_k-t^jx_k\|
\le \|T^j  x_k -T^{j-1}( tx_k)\| + \|T^{j-1}( tx_k) -t^jx_k\| $$
$$\le L^{j-1} \|T x_k - tx_k \|+ t \|T^{j-1} x_k -t^{j-1}x_k\| \to 0$$
as $k\to \infty$, by the induction assumption. Here $L$ denotes  the Lipschitzity constant of $T$.

%Since $T(0)=0$ the map $T$ is bounded and so 
It follows that
$ \|T^j \| \ge \lim _{k\to \infty}\|T^j x_k \|=t^j$ and so $r(T) \ge t$.
\end{proof}
%\begin{question} {\rm Is Lemma \ref{spdominate} still correct if $T$ is only positively homogenous and bounded? No, by Vladimir's example.
%}
%\label{bounded_enough?}
%\end{question}

The following example shows that in Lemma \ref{spdominate} %and Corollary \ref{rismax} 
we can not replace the property that "$T$ is Lipschitz" by a weaker property that "$T$ is bounded". 
\begin{example} {\rm 
Let $X=\ell^\infty$ with the standard basis $x_n, y_n, z_n \quad(n=1,2,\dots)$.
More precisely, the elements of $X$ are formal sums
$$
x=\sum_{n=1}^\infty (\alpha_n x_n +\beta_n y_n + \gamma_n z_n)
$$
with real coefficients $\alpha_n, \beta_n, \gamma_n$ such that
$$
\|x\|:= \sup\{|\alpha_n|, |\beta_n|, |\gamma_n|: n=1,2,\dots\}<\infty.
$$
% $X=\ell^\infty \times \ell^\infty  \times \ell^\infty $ with the norm 
%$\|(x,y,z)\|_{X}=\max\{\|x\|_{\infty}, \|y\|_{\infty}, \|z\|_{\infty}\}$ and let  $x_n=(e_n, 0,0)$, $y_n=(0,e_n,0)$, %$z_n=(0,0,e_n)$ for $n\in \mathbb{N}$,
%where $e_n$ are standard basis vectors in $\ell^\infty$.
   %% cartesian poduct of 3 copies of $\ell^\infty$ or  NO: $y_n %=x_{n+1}$, $z_n =x_{n+2}$, %$(n=1,2,\dots)$ 
Then $X$ is a Banach lattice with the natural order.

Let 
$$
C=\Bigl\{
\sum_{j=1}^\infty (\alpha_j x_j+\beta_j y_j+\gamma_j z_j)\in X: \alpha_j, \gamma_j\ge 0, \beta_j=j\alpha_j+j\gamma_j\ \hbox{ for all  } j\Bigr\}.
$$
Then $C$ is a closed max-cone (moreover convex and normal).
Let $T:C\to C$ be defined by
$$
T\Bigl(\sum_{j=1}^\infty (\alpha_j x_j+\beta_j y_j+\gamma_j z_j)\Bigr)=
\sum_{j=1}^\infty(j\alpha_j y_j+\alpha_j z_j).
$$
Clearly $\|T\|\le 1$, $T$ is positively homogeneous and preserves (all) suprema.

For $k\in \mathbb{N}$ let $u_k=k^{-1}x_k+y_k$. Then $\|u_k\|=1$ and $Tu_k=y_k+k^{-1}z_k$. So $\|Tu_k-u_k\|=k^{-1}$ and $1\in\sigma_{ap}(T)$.
On the other hand $T^2=0$ and so $r(T)=0$.
 
Note that  $T$ is not Lipschitz, since $\|Tu_k-u_k\|=k^{-1}$ but $\|T^2u_k-Tu_k\|=\|Tu_k\|=1$.
}
\label{good_stuff}
\end{example}
The following technical lemma is essentially needed in the proofs of our main results Theorem \ref{main_lattice}, Theorem \ref{appl_to_matrices}, Theorem \ref{main_normal} and Theorem \ref{appl_to_matrices_lin}.
\begin{lemma}
\label{numbers}
Let $\e>0$ and $K\ge 1$. Then there exists $n\in\NN$ with the following property:
if $(\al_k)_{k=0}^\infty$ is a sequence of real numbers such that $0\le\al_k\le K^k$ for all $k$, $\al_n\ge 1/2$ and $\limsup_{k\to\infty}\al_k^{1/k}\le 1$, then there exist $m\in\NN$ and nonnegative numbers $\beta_k, (k=0,1,\dots),$ such that
$$
\beta_0\le\e ,
$$
$$
|\beta_{k+1}-\beta_k|\le 2\e ,
$$
$$ \beta _k < \beta _{k+1} \quad ({\it for \ all \ } k=0, 1, \ldots , m-1),$$
$$ \beta _k > \beta _{k+1} \quad ({\it for \ all \ } k=m, m+1, \ldots ),$$
$$
\al_m\beta_m=1, \al_k\beta_k\le 1\quad ({\it for\  all\  }k),
$$
$$
\lim_{k\to\infty}\al_k\beta_{k+1}=0.
$$
\end{lemma}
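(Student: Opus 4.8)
The plan is to realise $(\beta_k)$ as a \emph{tent}: a strictly increasing ramp from a small value $\beta_0\le\e$ up to a peak at an index $m$, where it meets the ceiling $1/\al_k$ (so that $\al_m\beta_m=1$), followed by a strictly decreasing tail down towards $0$. The constraint $\al_k\beta_k\le1$ is just $\beta_k\le 1/\al_k=:\gamma_k$, and $\al_k\le K^k$ guarantees $\gamma_k\ge K^{-k}>0$, so the ceiling is positive and the infima below are attained. Since $|\beta_{k+1}-\beta_k|\le2\e$ is required, any tent with peak $m$ must lie above the V given by $k\mapsto\gamma_m-2\e|k-m|$, so a necessary condition on the peak is the \emph{contact property}
$$
\gamma_m\le\gamma_k+2\e|k-m|\qquad(\text{for all }k),
$$
i.e. this V stays below the ceiling everywhere. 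The heart of the construction is to choose $m$ so that this holds.

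I would first fix $n$ with $\e+2\e n\ge2$, say $n\ge2/\e$ (this is where $\al_n\ge1/2$, hence $\gamma_n\le2$, enters). Then I set
$$
m:=\arg\min_{j\ge0}\bigl(\gamma_j+2\e|n-j|\bigr),
$$
the index at which the V emanating from $(n,\gamma_n)$ first touches the ceiling. Two elementary facts make this choice work: by the triangle inequality $m$ automatically satisfies the contact property (from $\gamma_m+2\e|n-m|\le\gamma_k+2\e|n-k|\le\gamma_k+2\e|n-m|+2\e|m-k|$), and $\gamma_m\le\gamma_m+2\e|n-m|=\min_j(\gamma_j+2\e|n-j|)\le\gamma_n\le2$, whence $|m-n|\le\gamma_n/(2\e)\le1/\e$ and, since $n\ge2/\e$, also $m\ge1/\e\ge\gamma_m/(2\e)$. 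Thus $m\ge1$ is large enough to accommodate the ramp, and $\al_m=1/\gamma_m\ge1/2$.

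With $m$ fixed I would build three pieces and glue them. On $[0,m]$ take $\beta_k=\gamma_m-2\e(m-k)$ wherever this exceeds a small threshold, preceded by a short strictly increasing positive ramp $c,2c,\dots$ (with $0<c\le\e$ smaller than $\min_{j\le m}\gamma_j/(j+1)$) to carry $\beta$ up from $\beta_0\le\e$; the contact property gives $\beta_k\le\gamma_k$, the slope is exactly $2\e$, and this piece increases strictly to $\beta_m=\gamma_m$, so $\al_m\beta_m=1$. Immediately past the peak I continue the linear descent $\beta_k=\gamma_m-2\e(k-m)$, which stays $\le\gamma_k$ by the same contact property and has every decrement equal to $2\e$; starting the descent \emph{linearly} is essential, since a geometric drop from height $\gamma_m\le2$ would already violate the $2\e$ bound at the first step. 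Once $\beta$ has fallen below $2\e$ I switch to a capped geometric tail $\beta_{k+1}=\min\bigl(\gamma_{k+1},\tfrac12\beta_k\bigr)$: this is strictly decreasing, stays under the ceiling, has all decrements $<2\e$ (every value is already $<2\e$), tends to $0$, and—this is the only genuinely analytic input—forces $\al_k\beta_{k+1}\to0$, because $\limsup_k\al_k^{1/k}\le1$ gives $\limsup_k(\al_k2^{-k})^{1/k}\le\tfrac12<1$, i.e. $\al_k2^{-k}\to0$, while $\beta_{k+1}\le C\,2^{-k}$ on the tail.

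The one place needing care, which I treat as the crux, is the verification that $m=\arg\min_j(\gamma_j+2\e|n-j|)$ does everything at once: that it is a \emph{global} contact point (the short triangle-inequality computation above, which is exactly what tames arbitrary \emph{spikes} of $\al_k$ on \emph{both} sides of the peak—otherwise a spike would force $\beta$ above the ceiling or a jump larger than $2\e$), that $\gamma_m\le2$ and $m\gtrsim1/\e$ (so the ramp fits and $\al_m\ge1/2$), and that the linear-then-geometric descent glues into a genuinely strictly decreasing sequence respecting the ceiling. Once these bookkeeping points are in place, reading off the six required properties of $(\beta_k)$ is routine.
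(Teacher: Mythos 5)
Your construction is correct in substance but follows a genuinely different route from the paper, so a comparison is worthwhile. The paper centers a \emph{multiplicative} V-shape $\gamma_k=\al_n(1+\e)^{|k-n|}$ at $n$, chooses $m$ to \emph{maximize} the ratio $\al_k/\gamma_k$ (the maximum exists because $\limsup_k\al_k^{1/k}\le 1<1+\e$), and then writes down the single closed formula $\beta_k=\al_m^{-1}(1+\e)^{-|k-m|}$; all six properties follow by one-line computations (a geometric drop with ratio $(1+\e)^{-1}$ from a peak $\le 2$ never falls by more than $2\e$, contrary to what your parenthetical remark suggests for geometric descents in general). In that approach the hypothesis $\al_k\le K^k$ is essential: it is exactly what forces $m\ge m_0$ (where $(1+\e)^{m_0}>2\e^{-1}$ and $(1+\e)^n>2K^{m_0}(1+\e)^{m_0}$), which in turn yields $\beta_0\le\e$. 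Your \emph{additive} V-shape $j\mapsto\gamma_j+2\e|n-j|$ with $\gamma_j=1/\al_j$ replaces the ratio maximization by a contact-point minimization, and your triangle-inequality proof of the contact property is correct; the decisive geometric difference is that slopes $\pm 2\e$ confine the contact point automatically, $|m-n|\le\gamma_n/(2\e)\le 1/\e$, so your $n\approx 2/\e$ depends only on $\e$. Indeed the bound $\al_k\le K^k$ enters your argument only through the harmless observation that each $\gamma_k>0$, which already follows from $\al_k$ being a real number; so you in fact prove a slightly \emph{stronger} statement than the lemma, with $n$ independent of $K$. The price is the loss of a single formula: you must glue a ramp, a linear descent and a geometric tail (on $[0,m]$ the clean gluing is the pointwise maximum of the ramp $(k+1)c$ and the line $\gamma_m-2\e(m-k)$, whose increments are at most $\max(c,2\e)=2\e$), and each property needs a small case analysis, whereas in the paper every verification is uniform.

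One wrinkle in your descent should be fixed in a write-up: with the switching threshold $2\e$ as stated, the linear descent can hit the value $0$ exactly (this happens whenever $\gamma_m$ is an integer multiple of $2\e$), and then the capped geometric tail is frozen at $0$, destroying strict decrease and nonnegativity of the later terms. The fix is trivial: switch to $\beta_{k+1}=\min\bigl(\gamma_{k+1},\tfrac12\beta_k\bigr)$ as soon as the linear value drops below $3\e$, say. Then the first tail value is positive; the first capped decrement is still at most $2\e$, because the contact property gives $\gamma_{k+1}\ge\gamma_m-2\e(k+1-m)=\beta_k-2\e$ there, while the uncapped alternative drops by $\beta_k/2<3\e/2$; and all later decrements are below $3\e/2<2\e$. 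Everything else, including your use of $\limsup_k\al_k^{1/k}\le 1$ solely to get $\al_k2^{-k}\to 0$ in the tail, is sound.
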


\begin{proof}
Choose $m_0\in\NN$ such that $(1+\e)^{m_0}>2\e^{-1}$. Choose $n>m_0$ such that $(1+\e)^n>2K^{m_0} (1+\e)^{m_0}$.
Let $(\al_k)$ be a sequence of nonnegative numbers satisfying the assumptions.

Set $\gamma_k=\alpha_n(1+\e)^{|k-n|}$. Then $\gamma_n=\alpha_n \ge 1/2$.
Let $m\in\NN$ satisfy 
$$
\frac{\al_m}{\gamma_m}=\max_k\frac{\al_k}{\gamma_k}
$$
(such an $m$ exists since $\lim_{k\to\infty}\frac{\al_k}{\gamma_k}=
\al_n^{-1}\lim_{k\to\infty}\frac{\alpha _k}{(1+\e)^{k-n}}=0$).
In particular, we have $\frac{\al_m}{\gamma_m}\ge\frac{\al_n}{\gamma_n}=1$ and $\al_m\ge \gamma_m\ge \al_n\ge 1/2$.

We show that $m\ge m_0$. Suppose the contrary that $m < m_0$. Then $n \ge m$, since otherwise $n <m < m_0$ provides a contradiction. We have $\al_m\le K^m$ and \\ 
$\gamma_m=\al_n(1+\e)^{n-m}$. So 
$$
\frac{\al_m}{\gamma_m}\le
\frac{K^m}{\al_n(1+\e)^{n-m}}<
\frac{2K^{m_0}(1+\e)^{m_0}}{(1+\e)^n}<1,
$$
a contradiction. So $m\ge m_0$.

Set $\beta_k=\al_m^{-1}(1+\e)^{-|k-m|}$. Clearly $\al_m\beta_m=1$ and $\beta_m=\al_m^{-1}\le 2$.
We have \\
$\beta_0=\al_m^{-1}(1+\e)^{-m}\le 2(1+\e)^{-m_0}<\e$.
Clearly 
$\lim_{k\to\infty}\al_k\beta_{k+1}=0$.

For $k=0,1,\dots,m-1$ we have $\beta_k<\beta_{k+1}$ and
$$
\beta_{k+1}-\beta_{k}=
\al_m^{-1}\bigl( (1+\e)^{-(m-k-1)}-(1+\e)^{-(m-k)}\bigr)=
\al_m^{-1}(1+\e)^{k-m}(1+\e-1)\le
\e\al_m^{-1}\le 2\e.
$$
Similarly, for $k=m,m+1,\dots$ we have 
$\beta_k >\beta_{k+1}$ and
$$
\beta_{k}-\beta_{k+1}=
\al_m^{-1}\bigl( (1+\e)^{-(k-m)}-(1+\e)^{-(k+1-m)}\bigr)=
\al_m^{-1}(1+\e)^{m-k-1} \e\le
2\e.
$$

Finally, for each $k$ we have $\frac{\al_k}{\gamma_k}\le\frac{\al_m}{\gamma_m}$. So 
$$
\al_k\beta_k\le\frac{\al_m\gamma_k\beta_k}{\gamma_m}=
\frac{\al_n(1+\e)^{|k-n|}}{\al_n(1+\e)^{|m-n|}(1+\e)^{|k-m|}}\le 1$$
and the proof is complete.
\end{proof}

%\begin{question} {\bf , remark.}{ \rm
%The following results are probably ({\bf needs to be checked}) still correct if $X$ is a Banach space,$X_+$ needs to be a max cone,  $K=X_+$ almost normal cone (all the assumptions in the definition of a normal cone without closedness: we do not assume that $X_+$ is %closed),
%$C\subset K$ is a max cone (upper semi lattice): $x,y\in C\Rightarrow x\vee y\in C$ (we do not assume that $ \sup \{x, y\}$ exists for all $x,y \in X$, but only for $x,y \in C$.)
%}
%\end{question}
The following theorem is one of the main results of this section.
\begin{theorem} 
\label{main_lattice}
Let $X$ be %a Banach ordered space such that $X_+ -X_+$ is a normed vector lattice and 
a normed vector lattice, 
let $C\subset X_+$ be a non-zero max-cone. %normal max-subcone.
Let $T:C\to C$ be a mapping which is bounded, positively homogeneous and preserves finite suprema.
Let 
$\sup\{r_x(T):x\in C\}\le t\le r(T)$. Then $t\in\sigma_{ap}(T)$.

In particular, $ r(T)\in\sigma_{ap}(T)$. 
\end{theorem}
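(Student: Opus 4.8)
The plan is to reduce to the normalized case $t=1$ and then, for every $\e>0$, to manufacture a unit vector $u\in C$ with $\|Tu-u\|$ of order $\e$; this gives $1\in\sigma_{ap}(T)$, and after rescaling $T\mapsto t^{-1}T$ (which multiplies $r(T)$ and every $r_x(T)$ by $t^{-1}$, and preserves positive homogeneity and finite-suprema-preservation) yields $t\in\sigma_{ap}(T)$ for any $t>0$ in the interval. The degenerate case $t=0$ I would treat separately: then $r_x(T)=0$ for all $x$, so for a fixed unit $x$ the geometric mean of the ratios $\|T^{k+1}x\|/\|T^kx\|$ tends to $0$, whence $\liminf_k\|T^{k+1}x\|/\|T^kx\|=0$ and the normalized iterates $T^kx/\|T^kx\|$ witness $0\in\sigma_{ap}(T)$ (if some $T^kx=0$ one argues directly at the last nonzero iterate). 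In particular $r(T)\in\sigma_{ap}(T)$ follows by taking $t=r(T)$. The heart of the matter is to build the approximate eigenvector as a \emph{weighted supremum} $\bigvee_k\beta_k T^kx$ of the orbit of a cleverly chosen $x$, with weights $\beta_k$ supplied by Lemma \ref{numbers}.

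So assume $\sup\{r_x(T):x\in C\}\le 1\le r(T)$ and fix $\e>0$. Put $K=\max\{\|T\|,1\}$ and let $n$ be the integer furnished by Lemma \ref{numbers}. Since $r(T)=\inf_j\|T^j\|^{1/j}\ge 1$ we have $\|T^n\|\ge 1$, so by positive homogeneity there is a unit vector $x\in C$ with $\|T^nx\|\ge 1/2$. Writing $w_k=T^kx$ and $\alpha_k=\|w_k\|$, we have $\alpha_0=1$, $\alpha_k\le\|T^k\|\le K^k$, $\alpha_n\ge 1/2$, and $\limsup_k\alpha_k^{1/k}=r_x(T)\le 1$, so the hypotheses of Lemma \ref{numbers} are met. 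This produces $m$ and weights $\beta_k\ge 0$ with the listed properties; I would in addition invoke the explicit geometric form $\beta_k=\alpha_m^{-1}(1+\e)^{-|k-m|}$ from its proof, which gives $\beta_k-\beta_{k-1}=\e\beta_{k-1}$ for $k\le m$ and $\beta_{k-1}-\beta_k=\e\beta_k$ for $k>m$.

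Next I would set $y=\bigvee_{k=0}^{N}\beta_kw_k\in C$, where $N\ge m$ is large enough that $\beta_N\alpha_{N+1}\le\e$ (possible since $\alpha_k^{1/k}$ eventually stays below $1+\e$). Because $T$ is positively homogeneous and preserves finite suprema, $Ty=\bigvee_{k=0}^{N}\beta_kw_{k+1}=\bigvee_{k=1}^{N+1}\beta_{k-1}w_k$. Applying the elementary lattice inequality $|\bigvee_k p_k-\bigvee_k q_k|\le\bigvee_k|p_k-q_k|$ (an immediate consequence of (\ref{supineq}) in Lemma \ref{sup}) to $p_k=\beta_{k-1}w_k$ and $q_k=\beta_kw_k$ over $k=0,\dots,N+1$ (padding the absent terms of $y$ and $Ty$ by zero coefficients), one obtains
\[
|Ty-y|\ \le\ \beta_0w_0\ \vee\ \bigvee_{k=1}^{m}\e\beta_{k-1}w_k\ \vee\ \bigvee_{k=m+1}^{N}\e\beta_kw_k\ \vee\ \beta_Nw_{N+1}.
\]
The decisive observation is that the two interior suprema are dominated \emph{structurally}: since $\beta_{k-1}w_k\le Ty$ for all $k$ and $\beta_kw_k\le y$ for $k\le N$, the increasing part is $\le\e\,Ty$ and the decreasing part is $\le\e\,y$. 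Using monotonicity of the norm together with $\|a\vee b\vee c\vee d\|\le\|a\|+\|b\|+\|c\|+\|d\|$ for positive $a,b,c,d$, and the estimates $\|\beta_0w_0\|=\beta_0\le\e$, $\|\beta_Nw_{N+1}\|=\beta_N\alpha_{N+1}\le\e$ and $\|y\|\ge\|\beta_mw_m\|=\alpha_m\beta_m=1$, I get $\|Ty-y\|\le\e(3+\|T\|)\|y\|$. Normalizing $u=y/\|y\|$ and letting $\e\to 0$ then produces unit vectors with $\|Tu-u\|\to 0$, i.e. $1\in\sigma_{ap}(T)$.

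The main obstacle is exactly this interior estimate. A naive bound by Birkhoff's inequality (Lemma \ref{Birk}) would replace the supremum by a sum $\sum_k|\beta_{k-1}-\beta_k|\alpha_k$ of roughly $m$ terms, each of size $\sim\e$; this is useless, since $m$ grows without bound as $\e\to 0$. One must keep the supremum and exploit the multiplicative (geometric) decay of the weights $\beta_k$ from Lemma \ref{numbers}, so that each difference $|\beta_{k-1}-\beta_k|w_k$ is a factor $\e$ times a term \emph{already present} in $Ty$ or in $y$; here the unimodality of $(\beta_k)$ is precisely what attaches the increasing indices to $Ty$ and the decreasing ones to $y$. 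Controlling the remaining top boundary term $\beta_Nw_{N+1}$ needs only $\beta_N\alpha_{N+1}\to 0$, which follows from the subexponential growth condition $\limsup_k\alpha_k^{1/k}\le 1$.
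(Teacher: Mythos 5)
Your proof is correct, and it rests on the same core machinery as the paper's — the reduction to $t=1$, the separate elementary treatment of $t=0$, and above all Lemma \ref{numbers} applied to $\al_k=\|T^kx\|$ to produce a weighted supremum $\bigvee_k\beta_kT^kx$ of an orbit as the approximate eigenvector — but your estimation of $\|Ty-y\|$ takes a genuinely different and more unified route. The paper splits into two cases according to whether $M_0=\sup\{\|\bigvee_{j=0}^nT^jx\|:x\in C,\|x\|\le 1,n\in\NN\}$ is finite: when it is infinite, a separate construction is used (weights $t_k^{-j-1}$ along the orbit of vectors $x_k$ nearly maximizing $\|\bigvee_{j=0}^{n_k}T^jx_k\|$, with the error controlled by a telescoping application of Lemma \ref{Birk}); when it is finite, the interior error terms are bounded via Lemmas \ref{Birk} and \ref{sup} by $4\e M_0$, which tends to $0$ only because $M_0$ is a fixed finite constant. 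Your structural domination — exploiting the exact geometric ratios $\beta_k/\beta_{k-1}=1+\e$ for $k\le m$ and $\beta_{k-1}/\beta_k=1+\e$ for $k>m$, so that each increasing-part error term equals $\e\beta_{k-1}T^kx\le\e\,Ty$ and each decreasing-part term equals $\e\beta_kT^kx\le\e\,y$ — yields the relative bound $\|Ty-y\|\le\e(3+\|T\|)\|y\|$ with no reference to $M_0$, so the dichotomy disappears and one argument covers both of the paper's cases; as you correctly note, the naive Birkhoff sum bound would be useless here since $m\to\infty$ as $\e\to 0$. The price is mild: you must invoke the explicit form $\beta_k=\al_m^{-1}(1+\e)^{-|k-m|}$ from the \emph{proof} of Lemma \ref{numbers} rather than only its stated conclusions (the bound $|\beta_{k+1}-\beta_k|\le 2\e$ alone does not give domination by $\e\,Ty$ and $\e\,y$), so strictly one should restate the lemma to include the geometric-ratio property; and your choice of $N$ with $\beta_N\al_{N+1}\le\e$ deserves one explicit line, e.g. for $N>m$ one has $\beta_N\al_{N+1}=(1+\e)^2\al_{N+1}\beta_{N+2}\to 0$ by the lemma's last conclusion, or directly by playing $\limsup_k\al_k^{1/k}\le 1$ against the geometric decay of $\beta_k$. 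The remaining ingredients — padding the two suprema by zeros, the inequality $|\bigvee_kp_k-\bigvee_kq_k|\le\bigvee_k|p_k-q_k|$ deduced from (\ref{supineq}), the lower bound $\|y\|\ge\al_m\beta_m=1$, and $\|Ty\|\le\|T\|\,\|y\|$ — all check out.
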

\begin{proof}
If $t=0$, then for each $x\in C$, $\|x\|=1$ we have  $\lim_{n\to\infty}\|T^nx\|^{1/n}=0$. For each $\e>0$ there exists $k\ge 0$ such that $\|T^{k+1}x\|<\e \|T^kx\|$.
If $u=\frac{T^kx}{\|T^kx\|}$ then $\|u\|=1$ and $\|Tu\|<\e$.

So without loss of generality we may assume that $t=1$.

\medskip
We distinguish two cases:

\noindent{\bf I.}
Suppose that $\sup\{\|\bigvee_{j=0}^n T^jx\|:x\in C, \|x\|\le 1, n\in\NN\} =\infty$. 

Let $k\in\NN$. Find $x_k\in C$, $\|x_k\|\le 1$ and $n_k\in\NN$ such that $\|\bigvee_{j=0}^{n_k} T^jx_k\|>k$.
Find $t_k\in(1,1+k^{-1})$ such that
$t_k^{-n_k-1}> 1/2$. Find $r_k>n_k$ such that $\frac{\|T^{r_k+1}x_k\|}{t_k^{r_k+1}}<1$. Set $y_k:=\bigvee_{j=0}^{r_k}\frac{T^jx_k}{t_k^{j+1}}$. Then $\|y_k\|\ge
\|t_k^{-n_k-1}\bigvee_{j=0}^{n_k} T^jx_k \|\ge k/2$.

Set $u_k=\frac{y_k}{\|y_k\|}$. Then $\|u_k\|=1$ and %since  $X_+ -X_+$ is a normed vector lattice $C \subset X_+ -X_+ $ and
 by Lemma \ref{Birk} it follows
$$
\|Tu_k-u_k\|\le
\|Tu_k-t_ku_k\|+(t_k-1)\|u_k\|=
\|y_k\|^{-1}\Bigl\|\bigvee_{j=0}^{r_k}\frac{T^{j+1}x_k}{t_k^{j+1}}-\bigvee_{j=0}^{r_k}\frac{T^{j}x_k}{t_k^{j}}\Bigr\|+ (t_k-1)
$$
$$
\le
\|y_k\|^{-1}\Bigl\|\frac{T^{r_k+1}x_k}{t_k^{r_k+1}}-x_k\Bigr\|+ k^{-1}
\le
2\|y_k\|^{-1}+ k^{-1}\le\frac{5}{k}\to 0
$$
as $k\to\infty$.
Hence $1\in\sigma_{ap}(T)$.

\medskip

\noindent{\bf II.}
Suppose that
$$
M_0:=
\sup\Bigl\{\Bigl\|\bigvee_{j=0}^n T^jx\Bigr\|:x\in C, \|x\|\le 1, n\in\NN\Bigr\}<\infty.
$$

Let $\e>0$ and $K:=\|T\|$. Let $n$ be the number constructed in Lemma \ref{numbers}.  
We have $\|T^n\|\ge r(T^n)=r(T)^n\ge 1$. 
Find $x\in C$ such that $\|x\|=1$ and $\|T^nx\|\ge 1/2$. Let $\alpha_k=\|T^kx\|$.

By Lemma \ref{numbers}, there exist $m\in\NN$ and nonnegative numbers $\beta_k$ such that $\beta_0\le\e$, $|\beta_{k+1}-\beta_k|\le2\e$, $ \beta _k < \beta _{k+1} $  for  all  $k=0, 1, \ldots , m-1$,
$ \beta _k > \beta _{k+1} $ for  all $ k=m, m+1, \ldots $,
$\al_m\beta_m=1$, $\al_k\beta_k\le 1$ for all $k$ and $\lim_{k\to\infty}\al_k\beta_{k+1}=0$.

Fix $r>m$
such that $\beta_{r}<\e\|T^{r-1}x\|^{-1}$.

Set $u=\bigvee_{k=0}^r\beta_k T^kx$. Since $u\ge \beta_m{T^mx}$, we have $\|u\|\ge \beta_m\|T^mx\|=1$.
By Lemma \ref{Birk} and Lemma \ref{sup} we have
$$
\|Tu-u\|
=
\Bigl\|\bigvee_{k=0}^r\beta_k{T^{k+1}x}-
\bigvee_{k=0}^r\beta_k T^kx\Bigr\|
$$
$$
\le
\Bigl\|\beta_0{x}\Bigr\|+
\Bigl\|\bigvee_{k=1}^{m}{T^kx}\beta_{k}
-\bigvee_{k=1}^{m}{T^kx}\beta_{k-1}\Bigr\|+
\Bigl\|\bigvee_{k=m+1}^{r}{T^kx}\beta_{k-1}
-\bigvee_{k=m+1}^{r}{T^kx}\beta_{k}\Bigr\|+
\Bigl\|\beta_r{T^{r+1}x}\Bigr\|
$$
$$
\le
\e+
\Bigl\|\bigvee_{k=1}^{m}{T^kx}(\beta_{k}-\beta_{k-1})\Bigr\|+
\Bigl\|\bigvee_{k=m+1}^{r}{T^kx}(\beta_{k-1}-\beta_{k})\Bigr\|+\e K^2
\le
\e+4\e M_0 + \e K^2\to 0
$$
as $\e\to 0$.
Hence $1\in\sigma_{ap}(T)$.
\end{proof}

%It is possible to reformulate Theorem 3.5, or to write after Theorem 3.5 a remark that we %have in fact proved more:
 
The proof above shows more. Namely, the following more general result is proved in the same way.
 
\begin{theorem} Let $X$ be a normed vector lattice, let $C\subset X_+$ be a non-zero max-cone. Let $T:C\to C$ be a mapping which is bounded, positively homogeneous and preserves finite suprema.
Let $C'\subset C$ be a bounded subset satisfying
$\|T^n\|=\sup\{\|T^nx\|: x\in C'\}$ for all $n$.
Then
$$
[\sup\{r_x(T):x\in C'\} , r(T) ]  \subset \sigma_{ap}(T).
$$
%The proof is the same and this version implies the statement for infinite non-negative %matrices (for $C'=\{e_j: j=1,2,\dots\}$).
\label{appl_to_matrices}
\end{theorem}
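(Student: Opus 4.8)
The plan is to show that Theorem \ref{appl_to_matrices} follows by rerunning the proof of Theorem \ref{main_lattice} with the supremum defining the operator norm taken over the restricted set $C'$ rather than all of $C$. The crucial observation is that the two cases in that proof, and every estimate in them, only ever use a \emph{single} vector $x$ (or $x_k$) chosen so that $\|T^nx\|$ is close to $\|T^n\|$; they never use that such a vector exists throughout $C$. So the whole argument goes through verbatim once we insist that the chosen vectors come from $C'$. Since the statement now asserts $[\sup\{r_x(T):x\in C'\},r(T)]\subset\sigma_{ap}(T)$, I first reduce, exactly as before, to proving that a fixed $t$ with $\sup\{r_x(T):x\in C'\}\le t\le r(T)$ lies in $\sigma_{ap}(T)$, and by the positive-homogeneity rescaling I may assume $t=1$.

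First I would handle the case distinction relative to $C'$. Set
$$
M_0:=\sup\Bigl\{\Bigl\|\bigvee_{j=0}^n T^jx\Bigr\|:x\in C',\ \|x\|\le 1,\ n\in\NN\Bigr\}.
$$
If $M_0=\infty$, I choose $x_k\in C'$ with $\|x_k\|\le 1$ and $n_k$ so that $\|\bigvee_{j=0}^{n_k}T^jx_k\|>k$, and then construct $t_k,r_k,y_k,u_k$ exactly as in part I of the proof of Theorem \ref{main_lattice}; the estimate $\|Tu_k-u_k\|\le 5/k\to 0$ is unchanged, giving $1\in\sigma_{ap}(T)$. If $M_0<\infty$, I set $K:=\|T\|$, take $n$ from Lemma \ref{numbers}, and use the hypothesis $\|T^n\|=\sup\{\|T^nx\|:x\in C'\}$ to find $x\in C'$ with $\|x\|=1$ and $\|T^nx\|\ge 1/2$. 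With $\alpha_k=\|T^kx\|$, the bound $\alpha_k\le K^k$ holds and $\limsup_k\alpha_k^{1/k}=r_x(T)\le 1$ because $x\in C'$ and $t=1\ge\sup\{r_x(T):x\in C'\}$; these are precisely the hypotheses of Lemma \ref{numbers}. I then build $u=\bigvee_{k=0}^r\beta_kT^kx$ and run the identical chain of inequalities via Lemma \ref{Birk} and Lemma \ref{sup} to get $\|Tu-u\|\le\e+4\e M_0+\e K^2\to 0$, so $1\in\sigma_{ap}(T)$.

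Since the whole range $t\in[\sup\{r_x(T):x\in C'\},r(T)]$ is handled uniformly by the $t=1$ reduction, this establishes the desired inclusion. The only real point to check — and the place where the restriction to $C'$ could in principle bite — is that the local-radius bound $\limsup_k\|T^kx\|^{1/k}\le 1$ needed to invoke Lemma \ref{numbers} is guaranteed by $x\in C'$ together with $t\ge\sup\{r_x(T):x\in C'\}$, and that $M_0$ is finite in case II; both hold by hypothesis, so no essential obstacle arises. I expect the main subtlety, rather than a difficulty, to be merely bookkeeping: verifying that every occurrence of ``$\sup$ over $C$'' in the original proof can be legitimately replaced by ``$\sup$ over $C'$'' without weakening any estimate, which it can because the operator norm hypothesis $\|T^n\|=\sup\{\|T^nx\|:x\in C'\}$ supplies exactly the near-maximizing vectors the argument consumes.
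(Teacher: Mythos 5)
Your overall strategy is exactly the paper's: the authors obtain Theorem \ref{appl_to_matrices} by observing that the proof of Theorem \ref{main_lattice} only ever consumes vectors at which $\|T^n\cdot\|$ is nearly maximal, and the hypothesis on $C'$ supplies such vectors. But there is a genuine gap in your Case II, precisely at the point you single out as unproblematic. The hypothesis $\|T^n\|=\sup\{\|T^nx\|:x\in C'\}\ge r(T)^n\ge 1$ gives you $x\in C'$ with $\|T^nx\|\ge 1/2$; it does \emph{not} give you such an $x$ with $\|x\|=1$. The set $C'$ is merely bounded: its elements may all have norm strictly larger than $1$ (up to some bound $R$), and nothing in the hypotheses caps them at $1$. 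This matters twice. First, without $\|x\|\le 1$ you only get $\al_k=\|T^kx\|\le R K^k$, so Lemma \ref{numbers} \emph{as stated} cannot be invoked: its proof uses the growth bound $\al_k\le K^k$ in the contradiction step $\frac{\al_m}{\gamma_m}\le\frac{K^m}{\al_n(1+\e)^{n-m}}<1$, and $n$ must be fixed before the sequence is seen. Normalizing, i.e.\ passing to $x/\|x\|$, does not rescue the lemma either, since then one only has $\al_n\ge 1/(2R)$ rather than $\al_n\ge 1/2$. Second, your $M_0$ is a supremum over $\{x\in C':\|x\|\le 1\}$, a set which need not contain the vector you chose (it can even be empty), so the bound $\|\bigvee_{k=0}^{m}T^kx\|\le M_0$ used in the final estimate is not available for your $x$.

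The repair is routine but has to be made: drop the normalization entirely. Put $R=\sup\{\|x\|:x\in C'\}$ (finite, without loss of generality $\ge 1$), define $M_0=\sup\{\|\bigvee_{j=0}^n T^jx\|:x\in C',\ n\in\NN\}$ with no norm restriction, and prove the obvious variant of Lemma \ref{numbers} in which the hypothesis is $\al_k\le RK^k$ and $n$ is chosen so that $(1+\e)^n>2RK^{m_0}(1+\e)^{m_0}$; its proof and all of its conclusions are otherwise unchanged. Then in Case I the estimate becomes $\|Tu_k-u_k\|\le (2R+3)/k\to 0$, and in Case II one gets $\|Tu-u\|\le R\e+4\e M_0+\e K^2\to 0$, with $\|u\|\ge\beta_m\|T^mx\|=1$ exactly as before. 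With these $R$-dependent constants your argument is complete; without them, the step ``find $x\in C'$ with $\|x\|=1$ and $\|T^nx\|\ge 1/2$'' is unjustified. (It does hold in the paper's own application, where $C'=\{e_j:j\in\NN\}$ consists of unit vectors, which may be why it looks harmless.)
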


Theorem \ref{main_lattice} and Lemma \ref{spdominate} imply the following result.
\begin{corollary} 
Let $X$ be 
a normed vector lattice and let $C\subset X_+$ be a non-zero max-cone. If $T:C\to C$ is a Lipschitz, positively homogeneous mapping which preserves finite suprema, then $r(T)= \max \{t : t \in \sigma_{ap}(T)\}$.
\label{rismax}
\end{corollary}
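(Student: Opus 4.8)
The plan is to prove Corollary \ref{rismax}, which asserts that for a Lipschitz, positively homogeneous, finite-suprema-preserving map $T$ on a non-zero max-cone $C$ in a normed vector lattice, $r(T) = \max\{t : t \in \sigma_{ap}(T)\}$. Establishing this reduces to two things: first, that $r(T)$ actually lies in $\sigma_{ap}(T)$, and second, that no point of $\sigma_{ap}(T)$ exceeds $r(T)$. Both ingredients are already available from the results stated above, so the proof is essentially an assembly.

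First I would note that a Lipschitz map with $T(0)=0$ is automatically bounded, so $r(T)$ is well defined and all the hypotheses needed for the earlier results are in place. Since $T$ is bounded, positively homogeneous, and preserves finite suprema on the max-cone $C$, Theorem \ref{main_lattice} applies directly and yields $r(T) \in \sigma_{ap}(T)$. This shows that the set $\{t : t \in \sigma_{ap}(T)\}$ is non-empty and contains the value $r(T)$.

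Next I would invoke Lemma \ref{spdominate}, which states that for a positively homogeneous and Lipschitz map $T$ we have $r(T) \ge t$ for every $t \in \sigma_{ap}(T)$. Here the Lipschitz hypothesis is exactly what is needed, and it is part of the corollary's assumptions. Consequently $r(T)$ is an upper bound for $\sigma_{ap}(T)$.

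Combining the two facts, $r(T)$ is both an element of $\sigma_{ap}(T)$ and an upper bound for it, so it is the maximum: $r(T) = \max\{t : t \in \sigma_{ap}(T)\}$. There is no real obstacle here; the work was done in Theorem \ref{main_lattice} and Lemma \ref{spdominate}. The only point deserving a word of care is verifying that the Lipschitz assumption simultaneously secures boundedness (so that Theorem \ref{main_lattice} is applicable) and supplies the spectral domination from Lemma \ref{spdominate}; Example \ref{good_stuff} shows that mere boundedness would not suffice for the latter, which is precisely why the corollary requires the stronger Lipschitz condition.
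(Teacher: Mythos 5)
Your proof is correct and matches the paper's own argument exactly: the paper derives Corollary \ref{rismax} precisely by combining Theorem \ref{main_lattice} (giving $r(T)\in\sigma_{ap}(T)$) with Lemma \ref{spdominate} (giving $r(T)\ge t$ for all $t\in\sigma_{ap}(T)$). Your added remark that Lipschitzness together with $T(0)=0$ yields boundedness is the same observation made in the paper's proof of Lemma \ref{spdominate}, so nothing is missing.
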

From Theorem \ref{main_lattice} also the following corollary follows. 
\begin{corollary} 
Let $X$ be a normed vector lattice and let $C\subset X_+$ be a non-zero max-cone. Let  $T:C\to C$ be a bounded, positively homogeneous mapping which preserves finite suprema. Then $r_x(T)\in\sigma_{ap}(T)$ for each $x\in C$, $x\ne 0$.
\label{local_lattice}
\end{corollary}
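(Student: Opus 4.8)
The plan is to prove Corollary \ref{local_lattice} by reducing it to Theorem \ref{main_lattice} applied to a restriction of $T$ to a suitable subcone generated by a single vector. Fix $x \in C$ with $x \ne 0$. The key observation is that $r_x(T) = \limsup_{n\to\infty}\|T^n x\|^{1/n}$ is a \emph{local} quantity, so I want to manufacture a new max-cone on which $T$ still acts nicely but whose \emph{global} Bonsall radius equals the \emph{local} radius $r_x(T)$. Concretely, I would consider the smallest max-cone $C_x \subset C$ containing the orbit $\{T^k x : k \ge 0\}$, namely the set of all finite suprema $\bigvee_{j} \alpha_j T^{k_j} x$ with $\alpha_j \ge 0$ (together with their norm-limits, or leaving it as is, since completeness is not needed). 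Because $T$ preserves finite suprema and is positively homogeneous, $T$ maps $C_x$ into $C_x$, and the restriction $T|_{C_x}$ is again bounded, positively homogeneous and finite-suprema preserving.

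The heart of the matter is to compute $r(T|_{C_x})$. First I would check the easy inequality $r(T|_{C_x}) \ge r_x(T)$: since $x \in C_x$ we have $r_x(T|_{C_x}) = r_x(T)$, and trivially $r(T|_{C_x}) \ge r_{x}(T|_{C_x})$. The harder direction is $r(T|_{C_x}) \le r_x(T)$, i.e.\ that no vector in the orbit-cone can have larger growth than $x$ itself. Here I would use that every unit vector $u \in C_x$ can be written (up to the normalizing scalar) as a finite supremum $u = \bigvee_{j=1}^N \alpha_j T^{k_j} x$, apply $T^n$ using suprema-preservation to get $T^n u = \bigvee_{j=1}^N \alpha_j T^{n+k_j} x$, and then bound the norm by Lemma \ref{Birk} (Birkhoff) as $\|T^n u\| \le \sum_{j=1}^N \alpha_j \|T^{n+k_j} x\|$. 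Since each term grows like $r_x(T)^{n}$ up to subexponential factors and $N$, the $\alpha_j$ and the $k_j$ are fixed once $u$ is fixed, one extracts $\limsup_n \|T^n u\|^{1/n} \le r_x(T)$ for each such $u$. The obstacle is that this pointwise control must be upgraded to a uniform bound on $\|(T|_{C_x})^n\|^{1/n}$, because $N$ and the $\alpha_j$ vary with $u$; I would handle this by noting $r(T|_{C_x}) = \sup\{r_u(T|_{C_x}) : u \in C_x\}$ is \emph{not} automatic (indeed Example \ref{example} warns against it), so instead I would argue directly on $\|(T|_{C_x})^n\|$ by taking the supremum over unit vectors and using that a unit vector $u=\bigvee_j \alpha_j T^{k_j}x$ satisfies $\|u\|=1$, which forces the coefficients $\alpha_j$ to stay controlled, yielding a uniform subexponential prefactor.

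Once $r(T|_{C_x}) = r_x(T)$ is established, the conclusion is immediate: applying Theorem \ref{main_lattice} to the max-cone $C_x$ and the map $T|_{C_x}$ gives $r(T|_{C_x}) \in \sigma_{ap}(T|_{C_x})$. Finally, since $C_x \subset C$, any approximating sequence of unit vectors in $C_x$ witnessing $r_x(T) \in \sigma_{ap}(T|_{C_x})$ also witnesses $r_x(T) \in \sigma_{ap}(T)$, because the defining infimum for $\sigma_{ap}(T)$ is over a larger set and the same vectors lie in $C$. Hence $r_x(T) \in \sigma_{ap}(T)$, as claimed. I expect the genuinely delicate step to be the uniform estimate establishing $r(T|_{C_x}) \le r_x(T)$; a cleaner alternative, which I would pursue if the uniformity argument becomes awkward, is to bypass the subcone entirely and rerun the proof of Theorem \ref{main_lattice} verbatim with the single fixed vector $x$ in place of the optimizing vectors, replacing the role of $r(T)$ by $r_x(T)$ throughout (this is exactly the generalization recorded in Theorem \ref{appl_to_matrices} with $C' = \{x\}$-type data), which directly produces the approximate eigenvectors for $t = r_x(T)$.
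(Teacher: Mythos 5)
Your construction of the orbit max-cone $C_x$ (the paper's proof uses exactly this cone, calling it $K$) and your pointwise Birkhoff estimate are the right skeleton, but the step you yourself flag as delicate --- upgrading the pointwise bound to $r(T|_{C_x})\le r_x(T)$ --- is not merely awkward: it is false in general, and no uniformity argument can rescue it. Knowing $\|u\|=1$ for $u=\bigvee_j\alpha_j T^{k_j}x$ only forces $\alpha_j\le\|T^{k_j}x\|^{-1}$, so your bound on $\|T^nu\|$ involves the ratios $\|T^{n+k_j}x\|/\|T^{k_j}x\|$, and these are \emph{not} controlled by $r_x(T)^n$ up to subexponential factors: the limsup defining $r_x(T)$ constrains $\|T^mx\|^{1/m}$ but says nothing about ratios along the orbit. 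Concretely, take $X=\ell^\infty$, $C=X_+$, $T$ the backward shift of Example \ref{leftshift}, and $x$ with coordinates $x_i=2^{-4^l}$ for $4^l\le i<4^{l+1}$, $l=0,1,2,\dots$. Then $\|T^jx\|=x_{j+1}$ and $r_x(T)=\limsup_j x_{j+1}^{1/j}=2^{-1/4}$; but for $j+1=4^l$ the unit vector $u_j=T^jx/\|T^jx\|\in C_x$ satisfies $\|T^ku_j\|=x_{j+k+1}/x_{j+1}=1$ for all $k\le 2\cdot 4^l$, so $\|(T|_{C_x})^{2\cdot 4^l}\|\ge 1$ for every $l$ and hence $r(T|_{C_x})=1>2^{-1/4}=r_x(T)$. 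This is the same phenomenon as Example \ref{example} (Bonsall radius strictly above every local radius), now occurring inside an orbit cone. Your parenthetical fallback also fails as stated: Theorem \ref{appl_to_matrices} with $C'=\{x\}$ requires $\|T^n\|=\|T^nx\|$ for every $n$ (or, applied to $T|_{C_x}$, that $\|(T|_{C_x})^n\|=\|T^nx\|$), which does not hold for a general $x$ --- the example above violates both.

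The missing idea is that no upper bound on $r(T|_{C_x})$ is needed at all, because Theorem \ref{main_lattice} gives the whole interval, not just its right endpoint. Your own computation shows $r_u(T|_{C_x})\le r_x(T)$ for every $u\in C_x$, and the value $r_x(T)$ is attained at $u=x$, so $\sup\{r_u(T|_{C_x}):u\in C_x\}=r_x(T)$; trivially $r_x(T)=r_x(T|_{C_x})\le r(T|_{C_x})$. Hence $t=r_x(T)$ lies in the interval $[\,\sup\{r_u(T|_{C_x}):u\in C_x\}\, ,\, r(T|_{C_x})\,]$, and Theorem \ref{main_lattice} applied to $T|_{C_x}$ yields $r_x(T)\in\sigma_{ap}(T|_{C_x})\subset\sigma_{ap}(T)$. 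This is exactly the paper's proof: by aiming at the endpoint $r(T|_{C_x})$ rather than at the interior point $r_x(T)$, you created a proof obligation that cannot be discharged, while the interval form of the theorem makes it unnecessary.
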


\begin{proof}
Let $x\in C$, $x\ne 0$. Let 
$$
K=\{\bigvee_{j=0}^n \alpha_j T^jx:n\in\NN, \alpha_j\ge 0 \quad(j=0,1,\dots, n)\}.
$$
Clearly $K$ is a non-zero max-cone, $TK\subset K$. Let $y\in K$, $y=\bigvee_{j=0}^n \alpha_j T^jx$ for some $n,\alpha_0,\dots,\alpha_n$.

Let $k\in\NN$. We have 
$$
\|T^ky\|\le
\sum_{j=0}^n \alpha_j\|T^{k+j}x\|\le
\max_j \alpha_j \cdot (n+1)\max\{\|T^{k+j}x\|:j=0,1,\dots,n\}
$$
and
$$
\|T^ky\|^{1/k}\le
(\max_j \alpha_j)^{1/k} \cdot (n+1)^{1/k}\max\{\|T^{k+j}x\|:j=0,1,\dots,n\}^{1/k}\to r_x(T)
$$
as $k\to\infty$. So 
$r_y(T)\le r_x(T).$ Thus $\sup\{r_y(T):y\in K\}=r_x(T)\le r(T|_{K})$.
By Theorem \ref{main_lattice}, $r_x(T)\in \sigma_{ap}(T|_{K})\subset\sigma_{ap}(T)$.
\end{proof}

\begin{remark}{\rm (i) As shown in Example \ref{good_stuff}, in Corollary \ref{rismax} the assumption "$T$ is Lipschitz"  can not be replaced by a weaker assumption "$T$ is bounded". 

(ii) An inspection of the proofs, or an application of the above results, show that Theorems \ref{main_lattice} and \ref{appl_to_matrices} and Corollaries \ref{rismax} and \ref{local_lattice}  hold under slightly weaker assumptions on $X$.  It suffices that $X$ is a ordered normed space, $C \subset X_+$ a non-zero max-cone and $X_+ - X_+$ a normed vector lattice (equivalently $X_+$ is a max-cone and there exists a lattice norm on $X_+ - X_+$).  
}
\end{remark}
Let us consider the following example from \cite{MN02}.
\begin{example} {\rm Let $X=C[0,1]$, $C=X_+$ and let $T:X \to X$ be a bounded linear operator defined by $T(x)(s)=sx(s)$.  The map $T:C \to C$ also preserves finite suprema (maxima) (and is Lipschitz and positively homogeneous) on $C$. %$\|T\|=\|T\|_Lip=1$
As pointed out in \cite{MN02}, $r(T)=1$ and $T(x) \neq x$ for all $x\in C$, $x\neq 0$. 
However, $1 \in \sigma_{ap}(T)$ and the approximate sequence of vectors 
$(x_k)_{k\in \NN} \subset C$, $\|x_k\|=1$, is given by $x_k(s)=s^k$, since
$$\|Tx_k - x_k \| =  \frac{k^k}{(k +1)^{k+1}} \to 0$$ 
as $k\to \infty$.
\label{explicit}
}
\end{example}

\begin{remark}{\rm Under additional compactness type assumptions on $T$, Theorem \ref{main_lattice} implies Krein-Rutman type results. As is well known, and also shown by Example \ref{explicit}, some additional assumptions are necessary to obtain such results.
 Let $X$, $C$ and $T$ be as in Theorem \ref{main_lattice}, where $C$ is also closed. If, in addition, $T$ is  compact (and continuous) and $r(T)>0$, then  %$t \in [\sup\{r_y(T):y\in C\}, r(T)]$, $t\neq 0$, 
there exists $y\in C$, $y\neq 0$ such that $Ty=r(T)y$. 
Moreover, for each nonzero $t\in\sigma_{ap}(T)$ there
exists an eigenvector in $C$.

Indeed, there exists a sequence $(x_k)\subset C$, $\|x_k\|=1$, with $Tx_k-tx_k\to
0$. Passing to a subsequence if necessary one can assume that $Tx_k\to
y$ for some $y\in C$. Clearly $tx_k\to y$, $y\ne 0$ and $Ty=ty$.

It is not hard to see that the same holds if we replace the assumption that "$T$ is compact" by the assumption that "$T$ is power compact" (i.e., that there exists $m \in \NN$ such that $T^m$ is compact).  

The results on the existence an eigenvector $x\in C$
 for a non-linear operator $T$ corresponding to $r(T)$ are known also under more general compactness type assumptions on $T$ (see e.g. \cite[Theorem 3.4, Theorem 3.10]{MN02}), \cite[Theorem 4.4]{MN10},
\cite[Theorem 10.6]{APV04}). We illustrate the usefulness of our 
Theorem \ref{appl_to_matrices} by giving an alternative  proof of \cite[Theorem 3.4]{MN02}
in the case of max-cones in Banach lattices and providing additional information in this case (Theorem \ref{MPrho<r}). Moreover, we do not need to assume the completeness of the norm.  In our proof we apply some of the ideas from \cite{F97}. On the other hand, the proof of 
\cite[Theorem 3.4]{MN02} was based on a lemma from fixed point index theory (\cite[Lemma 3.2]{MN02}, \cite[Theorem 2.1]{N81}).
     
To do this, we firstly recall some notions from \cite{MN02}. If $X$ is a normed space, let   $\nu$ denote
a homogeneous generalized measure of 
non-compactness on $X$ (as defined in \cite[Section 3]{MN02}), i.e., $\nu $ is a map which assigns to each bounded subset of $X$ a non-negative, finite number $\nu (A)$ and satisfies the following five conditions: 

(i) $\nu (A)=0$ if and only if $\overline{A}$ is compact,

(ii) $\nu (A+B) \le \nu (A) + \nu (B)$,

(iii) $\nu (\overline{ co (A)}) =\nu (A)$,

(iv) $\nu (A \cup B) = \max\{\nu (A), \nu (B)\}$,

(v) $\nu (\lambda A)= \lambda \nu (A)$ if $\lambda \ge 0$.

\noindent Here we denote $A+B =\{a+b: a\in A, b\in B\}$, and $\overline{ co (A)}$ denotes the smallest closed convex set containing $A$.

 Let $X$ be %%a Banach ordered space such that $X_+ -X_+$ is a normed vector lattice and 
normed vector lattice, %a Banach lattice and %
let $C\subset X_+$ be a non-zero closed max-cone and assume that $T:C\to X$ is a  continuous and
%bounded, 
positively homogeneous mapping (and thus bounded). %and preserves finite suprema. 
 Let
$$\nu _C (T)= \inf\{\lambda >0: \nu (T(A)) \le \lambda \nu (A) \;\; \mathrm{for} \;\; \mathrm{every} \;\; \mathrm{bounded} \;\; \mathrm{set} \;\; A \subset C\} \;\;$$ and
$$w_C (T)=  \sup\{\lambda >0: \nu (T(A)) \ge \lambda \nu (A) \;\; \mathrm{for} \;\; \mathrm{every} \;\; \mathrm{bounded} \;\; \mathrm{set} \;\; A \subset C\}, $$
where $\inf \emptyset =\infty$ and $\sup \emptyset =0$. Note that $w_C (I) = \nu _C (I) =1$ if $\mathrm{dim} (X)=\infty$. In this case we also have
\be
w_C (tI -T) \ge t -\nu _C (T) 
\label{w_nu}
\ee 
for $t\ge 0$. Indeed, 
$$t= w_C(tI) =  w_C(tI -T +T) \le  w_C(tI -T) + \nu _C ( T) , $$
which establishes (\ref{w_nu}).

 If $T:C \to C $, let
\be
\beta _{\nu} (T)= \lim _{n\to \infty} \nu _C (T^n)^{1/n}=\inf _{n\in \NN} \nu _C (T^n)^{1/n},
\label{pr_ess}
\ee
if $\nu _C (T^n) < \infty$ except for finitely many $n$. (If $\nu _C (T^n) = \infty$ for infinitely many $n$ one may define 
$\beta _{\nu}  (T)=\infty$.) The quantity $\beta _{\nu}  (T)$ was called the cone essential radius of $T$ in \cite{MN02}, but this terminology was changed in \cite{MN10} due to its imperfections.
%Moreover, our Theorem \ref{main_lattice} implies the existence of an eigenvector $x\in C$
 %for $T$ corresponding to $r(T)$ also under more general compactness type assumptions on %$T$ from e.g. \cite[Theorem 3.4., Theorem 3.10]{MN02}), 
%\cite[Theorem 10.6]{APV04}. We omit the details.
%% then our results imply (by applying standard fixed point methods) the existence of an eigenvector $x\in C$, $x\neq 0$ such that $Tx=r(T)x$ (Krein-Rutman type results). We omit the details. 
}
\label{KrRut1}
\end{remark}
\begin{example}{\rm \cite[Examples on p. 14 and 15]{MN02} Let $X$ be a normed space and $A$ a bounded subset of $X$. By $\alpha (A)$ we denote the classical Kuratowski-Darbo generalized measure of noncompactness, i.e., 
$$\alpha (A)= \inf \{\delta >0: \mathrm{there} \;\; \mathrm{exist}\;\;  k\in \NN \;\; \mathrm{and} \;\;  S_i \subset X, i=1, \ldots, k $$
 $$ \mathrm{with}\;\;  \mathrm{diam}(S_i)\le \delta \;\; \mathrm{such} \;\; \mathrm{that}\;\; A= \cup_{i=1} ^k S_i  \},$$
where $\mathrm{diam}$ denotes the diameter of the set. 
In the case $X=C(W)$, where $(W, d)$ is a metric space, let us denote for $\delta >0$ 
$$\gamma _{\delta} (A) = \sup \{|x(t)-x(s)|: x \in A, \mathrm{with} \;\; t,s \in W \;\;\mathrm{satisfying} \;\; d(t,s) \le \delta\}.$$
Then 
$$\gamma (A) = \inf _{\delta >0} \gamma _{\delta} (A)  = \lim _{\delta \to 0^+}\gamma _{\delta} (A) $$ 
defines a generalized measure of noncompactness that satisfies $\alpha (A) \le \gamma (A) \le 2 \alpha (A)$. Consequently, $\beta _{\alpha} = \beta _{\gamma}$. However,  there exist nonequivalent measures of non-compactness (see  \cite{MN11a}, \cite{MN11b}) and this is one of the flaws of the quantity $\beta _{\nu}  (T)$.
}
\end{example}
The following result is a version of \cite[Theorem 3.4]{MN02} for max-cones in normed vector lattices, which  provides more information than  \cite[Theorem 3.4]{MN02} even for e.g. max-cones in Banach lattices.
\begin{theorem}
Let $X$ be %%a Banach ordered space such that $X_+ -X_+$ is a normed vector lattice and 
a normed vector lattice with  $\mathrm{dim} (X)=\infty$ and  % Banach lattice and % 
let $C\subset X_+$ be a non-zero closed max-cone. %normal max-subcone.
Let $T:C\to C$ be a mapping which is continuous,
%bounded, 
positively homogeneous and preserves finite suprema and  let $C'\subset C$ be a bounded subset satisfying
$\|T^n\|=\sup\{\|T^nx\|: x\in C'\}$ for all $n$.
Further, assume that $\nu$ is a homogeneous generalized measure of 
non-compactness on $X$. % for which the inequality $\beta _{\nu}  (T) < r(T)$ holds.

If $t \in [\sup\{r_x(T):x\in C'\} , r(T) ]$ satisfies $t > \beta _{\nu}  (T)$, 
then there exists 
a nonzero $x\in C$ such that $Tx = tx$.
\label{MPrho<r}
\end{theorem}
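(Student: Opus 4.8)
The plan is to combine the approximate-point-spectrum result of Theorem \ref{appl_to_matrices} with the measure of noncompactness machinery recorded in Remark \ref{KrRut1}. Since $t>\beta_\nu(T)\ge 0$ we have $t>0$, and replacing $T$ by $t^{-1}T$ (which scales $r(T)$, each $r_x(T)$ and each $\nu_C(T^n)$ by $t^{-n}$, hence $\beta_\nu$ by $t^{-1}$, preserves the $C'$-condition, and turns $t$-eigenvectors into $1$-eigenvectors) we may assume $t=1$; thus $1\le r(T)$ and $\beta_\nu(T)<1$. By Theorem \ref{appl_to_matrices} we have $1\in\sigma_{ap}(T)$, but I will use the sharper output of its proof (that of Theorem \ref{main_lattice}): for every $\e>0$ there is a vector $u=u_\e\in C$ of the special form $u_\e=\bigvee_{k=0}^{r}\beta_k T^kx$, a finite supremum along a single $T$-orbit, with $\|u_\e\|\ge 1$ and $\|Tu_\e-u_\e\|\to 0$ as $\e\to 0$.

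First I would upgrade this to powers. Because $T$ preserves finite suprema and is positively homogeneous, $T^ju_\e=\bigvee_{k=0}^{r}\beta_k T^{k+j}x$, so the $T$-orbit of $u_\e$ is again a supremum along the orbit of $x$ with shifted coefficients. Writing $T^nu_\e-u_\e=\sum_{j=0}^{n-1}(T^{j+1}u_\e-T^ju_\e)$ and estimating each difference by Lemma \ref{Birk} exactly as in the proof of Theorem \ref{main_lattice}, where the boundary terms are controlled by $\beta_0$ and $\beta_r$ (both $O(\e)$) and the interior terms by $|\beta_k-\beta_{k-1}|\le 2\e$ together with the boundedness of the orbit suprema, one obtains $\|T^{j+1}u_\e-T^ju_\e\|\le C_j\e$ with constants depending on $j$, $\|T\|$ and the orbit bound but not on $r$. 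Summing over $j=0,\dots,n-1$ gives, for each fixed $n$, that $\|T^nu_\e-u_\e\|\to 0$ as $\e\to 0$.

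Next I would invoke the measure of noncompactness. Since $\beta_\nu(T)=\inf_n\nu_C(T^n)^{1/n}<1$ by (\ref{pr_ess}), fix $n$ with $\nu_C(T^n)<1$. Normalize to $\hat u_\e=u_\e/\|u_\e\|$, so $\|\hat u_\e\|=1$, and by the previous paragraph together with homogeneity, $\|T\hat u_\e-\hat u_\e\|\to 0$ and $\|T^n\hat u_\e-\hat u_\e\|\to 0$. Choose $\e_j\to 0$ and set $A=\{\hat u_{\e_j}:j\in\NN\}\subset C$. The set $(I-T^n)(A)=\{\hat u_{\e_j}-T^n\hat u_{\e_j}\}$ is a null sequence, hence relatively compact, so $\nu\bigl((I-T^n)(A)\bigr)=0$. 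By the analogue of (\ref{w_nu}) for $T^n$, namely $w_C(I-T^n)\ge 1-\nu_C(T^n)>0$, and the definition of $w_C$, we get $w_C(I-T^n)\,\nu(A)\le\nu\bigl((I-T^n)(A)\bigr)=0$; therefore $\nu(A)=0$ and $\overline{A}$ is compact. Passing to a convergent subsequence $\hat u_{\e_{j_i}}\to x\in C$ (here $C$ closed is used), we have $\|x\|=1$, so $x\ne 0$, and the continuity of $T$ with $\|T\hat u_\e-\hat u_\e\|\to 0$ yields $Tx=x$; undoing the scaling, $Tx=tx$.

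The main obstacle is the power-upgrade step. Since $T$ is merely continuous and not Lipschitz (cf. Example \ref{good_stuff}), an arbitrary approximate eigenvector for $t$ need not satisfy $T^nx_k-t^nx_k\to 0$, and merely knowing $t^n\in\sigma_{ap}(T^n)$ does not by itself produce a $T$-eigenvector, since the resulting limit would only be an eigenvector of $T^n$. It is therefore essential to use the explicit supremum-along-an-orbit form of the approximate eigenvectors produced in the proof of Theorem \ref{main_lattice}, for which $T^j$ acts transparently; one should also verify that the alternative construction occurring in Case I of that proof, the suprema $\bigvee_j t_k^{-(j+1)}T^jx_k$, admits the same telescoping estimate, so that $\|T^n\hat u_\e-\hat u_\e\|\to 0$ in all cases.
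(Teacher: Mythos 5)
Your proposal is correct, but it takes a genuinely different route from the paper's, and it is considerably heavier than it needs to be. The paper uses Theorem \ref{appl_to_matrices} purely as a black box: it first treats the case $\nu_C(T)<t$ exactly as in your final step (the sequence $(tI-T)x_k\to 0$ forms a relatively compact set, so by (\ref{w_nu}) and the definition of $w_C$ one gets $\nu(\{x_k\})=0$ and extracts an eigenvector); then, for general $t>\beta_\nu(T)$, it picks $m$ with $\nu_C(T^m)<t^m$, applies this first case to the map $T^m$ (checking $\sup\{r_x(T^m):x\in C'\}=(\sup\{r_x(T):x\in C'\})^m\le t^m\le r(T^m)$, so that $t^m\in\sigma_{ap}(T^m)$), obtaining $y\ne 0$ with $T^my=t^my$, and finally upgrades $y$ to a genuine $T$-eigenvector by a one-line trick: setting $S=t^{-1}T$, the vector $x=y\vee Sy\vee\cdots\vee S^{m-1}y$ satisfies $Sx=x$ since $S$ preserves finite suprema and $S^my=y$. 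This is exactly the obstacle you identified ("the resulting limit would only be an eigenvector of $T^n$"): it is real, but the paper dissolves it with this supremum trick rather than, as you do, by strengthening the approximate eigenvectors. Your alternative --- re-opening the proof of Theorem \ref{main_lattice}/\ref{appl_to_matrices}, exploiting the orbital-supremum form of $u_\e$ to get $\|T^nu_\e-u_\e\|\to 0$ for each fixed $n$, and then running the $\nu$-extraction with $T^n$ while using the $T$-approximation for the conclusion --- does work: in Case II the telescoping bound $\|T^{j+1}u_\e-T^ju_\e\|\le(K^j+4M_0+K^{j+2})\e$ holds as you claim (the shifted interior suprema are still dominated via the lattice norm by the orbit bound $M_0$), and the Case I verification you flagged but left undone also goes through, since pairing the $n$ extra leading terms of $t_k^ny_k$ against the $n$ extra trailing terms of $T^ny_k$ in Lemma \ref{Birk} gives $\|T^ny_k-t_k^ny_k\|\le C(n,K)$, whence $\|T^nu_k-u_k\|\le 2C(n,K)/k+(t_k^n-1)\to 0$. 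So your argument is complete modulo that fillable check. The trade-off: the paper's proof is short, modular, and cites only the statements of the earlier results; yours requires their proofs but yields a by-product of some independent interest, namely that the approximate eigenvectors of Theorems \ref{main_lattice} and \ref{appl_to_matrices} can be chosen to be simultaneously approximate eigenvectors for all powers of $T$. Note that both bridges rest on the same structural hypothesis, preservation of finite suprema: the paper uses it once at the very end, you use it throughout the power-upgrade step.
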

\begin{proof} By Theorem \ref{appl_to_matrices} we have $t \in \sigma _{ap} (T)$, so there exists a sequence $(x_k )_{k \in \NN} \subset C$, $\|x_k\|=1$, such that $\|(tI -T)x_k\| \to 0$ as $k\to \infty$. Denote $A=\{x_k : k \in \NN\}$.

First assume that $\nu _C (T) < t$. By (\ref{w_nu}) we have $w_C (tI -T) \ge t -\nu _C (T) >0 $. It follows from
$$w_C (tI-T) \nu (A) \le \nu ((tI-T)A)=0$$
that $\nu (A)=0$ and so $A$ has a compact closure. Therefore, there exist $z\in C$, $\|z\|=1$ and a subsequence $(x_{k_j}) \subset C$ such that $x_{k_j} \to z$ as $j \to \infty$ and so $Tz=tz$.

Since $\beta _{\nu} (T)  <t$, there exists $m\in \NN$ such that $\nu _C(T^m) < t^m$ by (\ref{pr_ess}). We also have
$$\sup\{r_x(T^m):x\in C'\} =(\sup\{r_x(T):x\in C'\})^m \le t^m \le  r(T)^m = r(T^m)$$
(see e.g. the proof of \cite[Proposition 2.1]{MN02}). By the above proved assertion there exists $y\in C$, $\|y\|=1$ such that $T^my =t^my$. Define $S=t^{-1}T $. Now the nonzero vector 
$$x= y\vee Sy \vee \cdots \vee S^{m-1}y \in C$$
satisfies $Sx=x$ and so $Tx=tx$.
\end{proof}

We call a max cone $C \subset X_+$ $\sigma$-order complete if for any $(x_n)_{n\in \NN} \subset C$, such that $x_n \le y$ for some $y\in X_+$  and all $n\in \NN$, there exists $\vee _{n=1} ^{\infty} x_n \in C$. 
In the following result we give some sufficient conditions for the existence of $x$ in a max-cone $C$ such that $r_x(T)=r(T)$.
 
%A max cone $C \subset X_+$ is called universally $\sigma$-complete if $\vee _{n=1} ^{\infty} x_n \in C$ for all  $(x_n)_{n\in \NN} \subset C$.

\begin{proposition}
Let $X$ be a normed ordered space such that $X_+$ is complete. %Banach ordered space,
% Banach lattice 
Let $C \subset X_+$  be a $\sigma$-order complete normal  max-cone and let $T:C\to C$ be a bounded, positively homogeneous, monotone mapping. %which preserves infinite countable suprema. 
Then there exists $x\in C$ such that $r_x(T)=r(T)$.
\label{attained}
\end{proposition}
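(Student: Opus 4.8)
The plan is to exhibit $r(T)$ as a local radius $r_x(T)$ by constructing a single element $x\in C$ that dominates, up to scalar factors, a sequence of near-optimal vectors for the norms $\|T^n\|$. If $r(T)=0$ there is nothing to prove, since any $x\in C$ satisfies $0\le r_x(T)\le r(T)=0$; so assume $\rho:=r(T)>0$. Because $\rho=\inf_n\|T^n\|^{1/n}$, we have $\|T^n\|\ge\rho^n$ for every $n$, so for each $n$ we may choose a unit vector $x_n\in C$ with $\|T^nx_n\|\ge\tfrac12\|T^n\|\ge\tfrac12\rho^n$.

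Next I would glue these vectors together. Fix positive scalars $\lambda_n$ and consider the candidate $x:=\bigvee_{n=1}^\infty\lambda_nx_n$. The two competing requirements on $(\lambda_n)$ are: (a) the supremum must actually exist \emph{and lie in} $C$; and (b) the weights must decay subexponentially, i.e.\ $\lambda_n^{1/n}\to1$. For (a) I would use that $\sum_n\lambda_n\|x_n\|=\sum_n\lambda_n<\infty$ makes the partial sums $s_N=\sum_{n\le N}\lambda_nx_n$ norm-Cauchy; since $X_+$ is complete (hence closed) they converge to some $s\in X_+$, and passing to the limit in $s_N\ge\lambda_nx_n$ gives $s\ge\lambda_nx_n$ for all $n$. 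Thus the family $\{\lambda_nx_n\}\subset C$ is order bounded above by $s$, and $\sigma$-order completeness of $C$ then guarantees $x=\bigvee_n\lambda_nx_n\in C$. The choice $\lambda_n=n^{-2}$ satisfies both (a) and (b).

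Having produced $x\in C$ with $x\ge\lambda_nx_n$ for every $n$, I would push this inequality through $T^n$. Monotonicity (applied $n$ times) together with positive homogeneity gives $T^nx\ge\lambda_nT^nx_n$, with both sides in $C$; normality with constant $M$ then yields $\lambda_n\|T^nx_n\|\le M\|T^nx\|$, so that $\|T^nx\|\ge\frac{\lambda_n}{M}\|T^nx_n\|\ge\frac{\lambda_n}{2M}\rho^n$. Taking $n$-th roots and using $\lambda_n^{1/n}\to1$ gives $\|T^nx\|^{1/n}\ge\rho\,(\lambda_n/2M)^{1/n}\to\rho$, whence $r_x(T)=\limsup_n\|T^nx\|^{1/n}\ge\rho=r(T)$. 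Since always $r_x(T)\le r(T)$, equality follows.

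The main obstacle, and the place where the hypotheses are genuinely used, is step (a): in a max-cone $C$ need not be closed under addition, so the naive candidate $s=\sum_n\lambda_nx_n$ need not belong to $C$. Completeness of $X_+$ serves only to manufacture an order bound $s$ for the $\lambda_nx_n$, after which $\sigma$-order completeness supplies the true supremum \emph{inside} $C$. The secondary delicacy is balancing summability against the requirement $\limsup_n\lambda_n^{1/n}\ge1$, which any polynomially decaying weight such as $\lambda_n=n^{-2}$ resolves.
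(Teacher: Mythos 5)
Your proof is correct and follows essentially the same route as the paper: choose unit vectors $x_n$ with $\|T^nx_n\|\ge\tfrac12\|T^n\|$, form $x=\bigvee_n n^{-2}x_n$, push the inequality $x\ge n^{-2}x_n$ through $T^n$ by monotonicity and homogeneity, and use normality to bound $\|T^nx\|$ from below. The only difference is that you spell out why the supremum exists in $C$ (completeness of $X_+$ yields the order bound $s=\sum_n n^{-2}x_n$, then $\sigma$-order completeness applies), a point the paper leaves implicit.
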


\begin{proof}
The statement is trivial if $r(T)=0$. So without loss of generality we may assume that $r(T)=1$.
Then for each $k\in\NN$ we have $\|T^k\|\ge r(T^k)=1$. Find $x_k\in C$ such that $\|x_k\|=1$ and $\|T^kx_k\|\ge 1/2$. Set  $x=\bigvee_{k=1}^\infty k^{-2}x_k$. Then $x\in C$, 
$T^kx \ge k^{-2} T^k x_k$
and so
$$
M\|T^kx\|\ge k^{-2}\|T^kx_k\|\ge \frac{1}{2k^2},
$$
 where $M$ is the constant from the definition of a normal cone. 
So  \\ $r_x(T)=\limsup_k\|T^kx\|^{1/k}\ge 1=r(T)$.
Hence $r_x(T)=r(T)$.
\end{proof}

Our results can be applied to various max-type operators (and to the corresponding max-plus type operators and their tropical versions known also as Bellman operators) arising in diverse areas of mathematics and related applications
% certain differential and difference equations,  mathematical physics, optimal control problems, discrete mathematics, turnpike %theory, mathematical economics, games and controlled Markov processes, the theory of generalized solutions of the %Hamilton-%Jacobi-Bellman differential equations, the theory of continuously observed and controlled quantum systems, ... 
(see e.g. \cite{MN02}, \cite{KM97},  \cite{LM05}, \cite{LMS01},  \cite{AGN} and the references cited there). We point out the following example that was studied in detail in  \cite{MN02} and \cite{MN10}.
\begin{example} {\rm Given $a>0$, consider the following
max-type kernel operators $T:C[0,a] \to C[0,a]$ of the form
$$(T(x))(s)=\max_ {t\in [\alpha (s), \beta (s)]}{k(s,t)x(t)},$$
where $x\in C[0,a]$ and $\alpha, \beta:[0,a]\to[0,a]$ are given continuous functions satisfying $\alpha \le \beta$.
The kernel $k:\mathcal{S} \to [0, \infty)$ is a given non-negative continuous function, where $\mathcal{S}$ denotes the compact set
$$\mathcal{S}=\{(s,t)\in [0,a]\times[0,a]: t \in [\alpha (s), \beta (s)]\}.$$
It is clear that for $C=C_+ [0,a]$ it holds $TC \subset C$. We will denote the restriction $T|_C$ again by $T$.
The eigenproblem of these operators arises in the study of periodic solutions of a class of 
differential-delay equations
$$\varepsilon y^{\prime}(t)=g(y(t),y(t-\tau)), \quad \tau=\tau(y(t)),$$
with state-dependent delay (see e.g. \cite{MN02}). %, \cite{MN03}).

By \cite[Proposition 4.8]{MN02} and its proof the operator $T:C \to C$ is a positively homogeneous, Lipschitz map that preserves finite suprema. Hence by Corollary \ref{rismax} it follows  that $r(T)= \max \{t : t \in \sigma_{ap}(T)\}$. By \cite[Theorem 4.3]{MN02} it also holds that 
$r(T)= \lim _{n \to \infty} b_n ^{1/n} =\inf _{n\ge 1} b_n ^{1/n}$, where
$b_n =\|T^n\|= \max _{\sigma \in \mathcal{S}_n} k_n (\sigma)$,
$$k_n  (\sigma)=k(s_0, s_1)k(s_1, s_2) \cdots k (s_{n-1}, s_n) $$
and
$$\mathcal{S}_n =\{(s_0, s_1,s_2,  \ldots , s_n): s_0 \in [0,a], s_i \in [\alpha(s_{i-1}), \beta (s_{i-1})], i=1,2, \ldots , n \}$$ 
%{\bf Describe approximative sequence for $r(T)$, describe $r_x(T)$, describe $\sigma _{ap} (T)$, applications for differential-delay equations.} 

Recall that certain Krein-Rutman type results were proved for $T:C \to C$ in \cite[Theorems 4.1, 4.2 , 4.4, Corollaries 4.21, 4.22]{MN02}, i.e.,  under suitable additional conditions on $\alpha, \beta$ and $k$ (suitable generalized compactness type conditions on $T$), there exists $x \in C$, $x\neq 0$, such that $Tx=r(T)x$. However, it was also shown in \cite[Proposition 4.23]{MN02} that there are also reasonable conditions on  $\alpha, \beta$ and $k$ for which such an eigenvector $x$ does not exist.% On the other hand, our results are valid also in these cases. 
}
\end{example}

We also consider the following related example.
\begin{example}{\rm 
Let $M$ be a nonempty set and let $X$ be the set of all bounded real functions on $M$.
With the norm $\|f\|_{\infty}=\sup\{|f(t)|:t\in M\}$ and natural operations $X$ is a normed vector lattice.
Let $C=X_+$ be the positive cone.

Let $k:M\times M\to [0,\infty)$ satisfy $\sup \{k(t,s):t,s\in M\}<\infty$.

Let $T:C\to C$ be defined by $(Tf)(s)=\sup\{k(s,t)f(t):t\in M\}$ and so $\|T\|=\sup \{k(t,s):t,s\in M\}$. Clearly $C$ is a max-cone, $T$ is bounded, positive homogeneous and preserves finite maxima. So Theorem \ref{main_lattice} applies.
Moreover, $T$ is Lipschitz. So by Corollaries \ref{rismax} and \ref{local_lattice} we have that $r(T)= \max \{t : t \in \sigma_{ap}(T)\}$ and $r_x(T)\in\sigma_{ap}(T)$ for each $x\in C$, $x\ne 0$. Note also that $r(T)=r_e (T)$, where $e(t)=1$ for all $t \in M$.

In particular, if $M$ is the set of all natural numbers $\NN$, our results apply to infinite bounded non-negative matrices $A=[a(i,j)]$ (i.e., $a(i,j) \ge 0$ for all $i,j \in \NN$ and    $\|A\|=\sup _{i,j \in \NN} a(i,j) < \infty$). In this case, $X= l^{\infty}$ and 
$C=l^{\infty}_+ $. We denote $T_A =T$ and we have
$$\|T_A\|= \|A\|= \sup _{j\in \NN} \|T_A e_j\|,$$
where $\{e_j: j\in\NN\}$ is the set of standard basis vectors. By Theorem \ref{appl_to_matrices} the following result follows.
}
\label{bounded}
\end{example}

\begin{corollary} Let $A$ be an infinite bounded non-negative matrix and 
let \\
$\sup\{r_{e_j}(T_A):j\in\NN\}\le t\le r(T_A)$.
Then $t\in \si_{ap}(T_A)$.
\end{corollary}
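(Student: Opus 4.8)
The plan is to deduce this directly from Theorem \ref{appl_to_matrices}, applied with $X = \ell^\infty$, $C = \ell^\infty_+$, $T = T_A$, and the bounded subset $C' = \{e_j : j \in \NN\}$. By Example \ref{bounded}, $X$ is a normed vector lattice, $C = X_+$ is a non-zero max-cone, and $T_A$ is bounded, positively homogeneous and preserves finite suprema; since $\|e_j\| = 1$, the set $C'$ is bounded. The conclusion of Theorem \ref{appl_to_matrices} for this data is exactly $[\sup\{r_{e_j}(T_A):j\in\NN\},\, r(T_A)] \subset \sigma_{ap}(T_A)$, which contains the given $t$. Hence the whole statement reduces to checking the one remaining hypothesis of Theorem \ref{appl_to_matrices}, namely that
$$\|T_A^n\| = \sup_j \|T_A^n e_j\| \qquad \text{for all } n .$$

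The one genuinely substantive point is that \emph{every power of $T_A$ is again a matrix operator of the same type}. Writing $(T_A f)(i) = \sup_j a(i,j) f(j)$ and iterating, a routine interchange of suprema (legitimate because all entries and all $f(j)$ are non-negative) gives $T_A^n = T_{A^{(n)}}$, where $A^{(n)} = [a^{(n)}(i,k)]$ is the $n$-fold max-times (tropical) product of $A$,
$$a^{(n)}(i,k) = \sup\{ a(i,j_1) a(j_1,j_2) \cdots a(j_{n-1},k) : j_1, \dots, j_{n-1} \in \NN \} .$$
Each term in this supremum is a product of $n$ entries, so $0 \le a^{(n)}(i,k) \le \|A\|^n$; thus $A^{(n)}$ is again a bounded non-negative matrix.

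With this observation the verification is immediate. For any bounded non-negative matrix $B$, Example \ref{bounded} records $\|T_B\| = \|B\| = \sup_j \|T_B e_j\|$ (concretely, $T_B e_j$ is the $j$-th column of $B$, so $\|T_B e_j\| = \sup_i b(i,j)$, whose supremum over $j$ is $\sup_{i,j} b(i,j) = \|B\|$). Applying this identity to $B = A^{(n)}$ yields
$$\|T_A^n\| = \|T_{A^{(n)}}\| = \sup_j \|T_{A^{(n)}} e_j\| = \sup_j \|T_A^n e_j\| ,$$
which is precisely what was needed. Theorem \ref{appl_to_matrices} then applies and gives $t \in \sigma_{ap}(T_A)$.

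I expect the main obstacle to be exactly this hypothesis check: Example \ref{bounded} states the column identity only for $n = 1$, and naively one has no control over $\|T_A^n\|$ versus the columns $T_A^n e_j$ for $n \ge 2$. The key idea that removes the difficulty is that the class of such max-kernel matrix operators is closed under composition, so the case of general $n$ collapses to the already-known case $n = 1$ applied to the tropical power $A^{(n)}$.
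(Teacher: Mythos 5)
Your proof is correct and takes essentially the same route as the paper: the paper's own proof is a one-line application of Theorem \ref{appl_to_matrices} with $C'=\{e_j : j\in\NN\}$. Your verification of the hypothesis $\|T_A^n\|=\sup_j\|T_A^n e_j\|$ via the identification $T_A^n=T_{A^{(n)}}$ with the tropical power $A^{(n)}$ is exactly the detail the paper leaves implicit, and it is sound.
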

\begin{proof} The set $C'=\{e_j: j\in\NN\}$ satisfies the conditions of Theorem \ref{appl_to_matrices}, which gives the result.
\end{proof}

The following example shows that in general $\sup\{r_{e_j}(T_A):j\in\NN\} \ne r (T_A)$. %, and so %Theorem \ref{mspectrum} for infinite matrices is not true. 
\begin{example}
{\rm
 Consider the left (backward) shift $T_A: C \to C$, $C=l^{\infty}_+$, 
 $$T_A(x_1, x_2, x_3, \ldots) = (x_2, x_3, x_4, \ldots),$$
   %direct sum $A=\bigoplus_{n=1}^\infty S_n$, where
%$S_n$ is the $n-$dimensional  left shift,
 i.e., $T_A e_1 = 0$ and $ T_A e_j = e_{j-1}$ for all $j \ge 2$. Then $r_{e_j}(T_A)=0$ for each basis
element $e_j$, but $r(T_A)=1$. We also have $\si_{ap}(T_A)=[0,1]= \si_{p} (T_A)$, where
$$\si_{p} (T_A) =\{ t\ge 0: T_A x= tx\;\; \mathrm{for}\;\; \mathrm{some}\;\; x\in C, \|x\|=1\}. $$

On the other hand, for its restriction $T_A |_{c_0 ^+ }$, to the positive cone of the space of null convergent sequences $c_0 ^+$,  we have $r(T_ A|_{c_0 ^+})=1$, $\si_{p} (T_A |_{c_0 ^+}) =[0,1)$ and $ \si_{ap}(T_ A|_{c_0 ^+})=[0,1]$. Note that this again shows, in particular, that some compactness type assumptions in 
Remark \ref{KrRut1} and in Theorem \ref{MPrho<r} are necessary.
}
\label{leftshift}
\end{example}

% etc.

\begin{remark}{\rm The special case of Example \ref{bounded} when  
$M=\{1, \ldots , n\}$ for some $n\in \NN$ is well known and studied under the name max-algebra (an analogue of linear algebra). Together with  its isomorphic versions (max-plus algebra and min-plus algebra also known as tropical algebra) it provides 
an attractive way of describing a class of non-linear problems appearing for instance in manufacturing and transportation scheduling, information 
technology, discrete event-dynamic systems, combinatorial optimization, mathematical physics, DNA analysis, ... (see e.g.  \cite{B98}, \cite{Bu10}, \cite{MP15}, \cite{MP12} 
\cite{BCOQ92},  \cite{BGC-G09}, \cite{PS05} and the references cited there). %\cite{G92}, \cite{BSD95}, \cite{GM10}, \cite{P11}

In particular, for a non-negative $n\times n$ matrix $A$  it holds (see e.g. \cite[Theorem 2.7]{MP15}) that 
$$
\si_ {ap}(T_A)=\si_ {p}(T_A) =\{t: \hbox{ there exists } j\in\{1,\dots,n\}, t=r_{e_j}(T_A)\}.
$$
However, as Example \ref{leftshift} shows, an analogue of  this result is not valid for infinite bounded non-negative matrices.
}
\end{remark}

\section{ Results on normal convex cones in normed spaces}

Next we prove the analogues of Theorems \ref{main_lattice} and \ref{appl_to_matrices} for positively homogeneous, additive and Lipschitz maps defined on normal wedges in normed spaces. This result generalizes and extends an implicitly known result that for a positive linear operator $T$ on a Banach lattice $X$ there exists a  positive sequence of approximative vectors for the usual spectral radius (see e.g. the proof of Krein-Rutman's theorem \cite[Theorem 7.10]{AA02}). We do not  assume the completeness of the norm, we do not assume that the space $X$ is a lattice and we also do not need to assume that the wedge $C$ is closed.
In the proof we apply a technique of the proof of Theorem \ref{main_lattice} and we include it for the sake of completeness.
\begin{theorem} 
Let $X$  be a normed space, $C\subset X$ a non-zero normal wedge and let $T:C\to C$ be positively homogeneous, additive and Lipschitz.

 If
$$
\sup\{r_x(T):x\in C\}\le t\le r(T),
$$
then $t\in\sigma_{ap}(T)$.

In particular,
$r(T)\in\sigma_{ap}(T)$. Moreover, $r(T)= \max \{t : t \in \sigma_{ap}(T)\}$.
\label{main_normal}
\end{theorem}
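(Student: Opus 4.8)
The plan is to mirror the structure of the proof of Theorem \ref{main_lattice}, transporting each max/lattice step into the additive, normal-wedge setting. As before I would normalize: the case $t=0$ is handled by the same argument (find $k$ with $\|T^{k+1}x\|<\e\|T^kx\|$ and normalize $T^kx$), so I may assume $t=1$. The role previously played by suprema $\bigvee_{j} \beta_j T^j x$ will now be played by finite sums $\sum_j \beta_j T^j x$, which lie in $C$ because $C$ is a wedge (convex cone) and $T$ maps $C$ into $C$; additivity of $T$ then gives $T\bigl(\sum_j \beta_j T^j x\bigr)=\sum_j \beta_j T^{j+1}x$, the exact analogue of supremum preservation. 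The normality constant $M$ replaces the lattice-norm inequalities: instead of Birkhoff's inequality (Lemma \ref{Birk}) and Lemma \ref{sup}, I would use that $0\le u\le v$ in $C$ implies $\|u\|\le M\|v\|$ to control the relevant norms.

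First I would split into the same two cases according to whether $\sup\{\|\sum_{j=0}^n T^j x\|:x\in C,\|x\|\le1,n\in\NN\}$ is infinite or finite. In Case I, for each $k$ pick $x_k$, $\|x_k\|\le1$, and $n_k$ with $\|\sum_{j=0}^{n_k}T^jx_k\|>k$; choose $t_k\in(1,1+k^{-1})$ with $t_k^{-n_k-1}>1/2$ and $r_k>n_k$ with $\|T^{r_k+1}x_k\|/t_k^{r_k+1}<1$; set $y_k=\sum_{j=0}^{r_k}t_k^{-(j+1)}T^jx_k$ and $u_k=y_k/\|y_k\|$. Here the telescoping identity is cleaner than in the lattice case: $Ty_k-t_ky_k=\sum_{j=0}^{r_k}t_k^{-(j+1)}T^{j+1}x_k-t_k\sum_{j=0}^{r_k}t_k^{-(j+1)}T^jx_k$ collapses to $t_k^{-(r_k+1)}T^{r_k+1}x_k-x_k$, so $\|Tu_k-u_k\|\le\|y_k\|^{-1}(\|t_k^{-(r_k+1)}T^{r_k+1}x_k\|+\|x_k\|)+(t_k-1)\to0$, giving $1\in\sigma_{ap}(T)$. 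No normality is even needed in this case.

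In Case II, with $M_0:=\sup\{\|\sum_{j=0}^n T^jx\|:x\in C,\|x\|\le1,n\in\NN\}<\infty$, I would run the Lemma \ref{numbers} machinery verbatim: fix $\e>0$, $K=\|T\|$, get $n$; since $\|T^n\|\ge r(T)^n\ge1$ choose $x$ with $\|x\|=1$, $\|T^nx\|\ge1/2$, set $\alpha_k=\|T^kx\|$, obtain $m$ and the coefficients $\beta_k$; fix $r>m$ with $\beta_r<\e\|T^{r-1}x\|^{-1}$ and set $u=\sum_{k=0}^r\beta_k T^kx$. Additivity gives $Tu-u=\sum_{k=0}^r\beta_k T^{k+1}x-\sum_{k=0}^r\beta_k T^kx=\sum_{k=1}^{r+1}\beta_{k-1}T^kx-\sum_{k=0}^r\beta_k T^kx$, which I would regroup exactly as in Theorem \ref{main_lattice} into a $\beta_0 x$ term, a sum over $k=1,\dots,m$ weighted by the increments $\beta_k-\beta_{k-1}$, a sum over $k=m+1,\dots,r$ weighted by the decrements $\beta_{k-1}-\beta_k$, and a tail $\beta_r T^{r+1}x$. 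Since all increments/decrements are nonnegative and bounded by $2\e$, and each partial sum $\sum_k \gamma_k T^kx$ with $0\le\gamma_k\le 2\e$ is dominated (using $0\le\cdot\le\cdot$ and normality) by $2\e M_0$ in norm, I would bound $\|Tu-u\|\le\e+4\e M_0+\e K^2\to0$. Finally $\|u\|\ge1$ follows because $u\ge\beta_m T^mx\ge0$ and normality forces $M\|u\|\ge\beta_m\|T^mx\|=1$, so a genuine unit-norm approximate eigenvector exists. The last sentence $r(T)=\max\{t:t\in\sigma_{ap}(T)\}$ then comes from Lemma \ref{spdominate}, since $T$ is Lipschitz.

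I expect the main obstacle to be the normality bookkeeping: in the lattice proof Lemma \ref{sup} delivered $\|\bigvee_j x_j-\bigvee_j y_j\|\le\|\bigvee_j(x_j-y_j)\|$ and Birkhoff's inequality gave clean control, whereas here I must verify that each regrouped block is a genuinely \emph{positive} element of $C$ (so that normality applies) and that the domination $\sum_k\gamma_k T^kx\le (2\e)\sum_k T^kx$ holds in the order of $X$ with the right normalization of $\sum_k T^kx$ into the unit ball. The additivity hypothesis is what makes the telescoping exact, so the technical heart is checking positivity of the partial sums and applying the normality constant $M$ uniformly rather than any lattice identity.
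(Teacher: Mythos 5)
Your architecture is exactly the paper's: the same reduction to $t=1$, the same dichotomy according to whether $\sup\{\|\sum_{j=0}^n T^jx\|:x\in C,\|x\|\le 1,n\in\NN\}$ is infinite or finite, Lemma \ref{numbers} in the bounded case, and Lemma \ref{spdominate} for the final assertion $r(T)=\max\{t:t\in\sigma_{ap}(T)\}$. Your one deviation is a genuine (small) simplification: you expand $T\bigl(\sum_k\beta_kT^kx\bigr)=\sum_k\beta_kT^{k+1}x$ directly from additivity and positive homogeneity and telescope exactly, whereas the paper writes the shifted sum as $T\bigl(\sum_{k\ge 1}\beta_kT^{k-1}x\bigr)$ and then invokes the Lipschitz constant $L$ to compare the two images of $T$; in your version the Lipschitz hypothesis is needed only through Lemma \ref{spdominate}, which is tidier.

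There is, however, one genuine error: the claim ``No normality is even needed in this case'' in Case I. Your estimate $\|Tu_k-u_k\|\le\|y_k\|^{-1}\bigl(\|t_k^{-(r_k+1)}T^{r_k+1}x_k\|+\|x_k\|\bigr)+(t_k-1)$ tends to $0$ only if $\|y_k\|\to\infty$, and normality is precisely what justifies this. From the choice of $x_k,n_k$ you know $\|\sum_{j=0}^{n_k}T^jx_k\|>k$, and in the wedge order $y_k\ge t_k^{-n_k-1}\sum_{j=0}^{n_k}T^jx_k$ with both sides in $C$; but in a non-normal wedge an order inequality between positive elements carries no norm information at all (failure of normality means exactly that there are $u\le v$ in $C$ with $\|u\|$ arbitrarily larger than $\|v\|$). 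The paper inserts the normality constant at exactly this point: $M\|y_k\|\ge\|t_k^{-n_k-1}\sum_{j=0}^{n_k}T^jx_k\|\ge k/2$, whence $\|y_k\|\ge k/(2M)$ and the estimate closes. The same bookkeeping should be carried through your Case II: the displayed bound $\e+4\e M_0+\e K^2$ must pick up factors of $M$ (e.g. $\|\sum_{k=1}^m(\beta_k-\beta_{k-1})T^kx\|\le 2\e M M_0$ by comparison with $2\e\sum_{k=0}^m T^kx$), and your own computation gives $M\|u\|\ge 1$, i.e.\ $\|u\|\ge 1/M$ rather than $\|u\|\ge 1$ --- both harmless, since all that is used is a lower bound on $\|u\|$ independent of $\e$. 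With normality reinstated in Case I, your proof is complete and is essentially the paper's, with the cleaner telescoping noted above.
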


\begin{proof} By Lemma \ref{spdominate} we have $r(T) \ge t$ for all  $t \in \sigma_{ap}(T)$.

Let $\sup\{r_x(T):x\in C\}\le t\le r(T)$.
Similarly as in the proof of Theorem \ref{main_lattice}  we may assume  without loss of generality that $t=1$.

\medskip
We distinguish two cases:

\noindent{\bf I.}
Suppose that $\sup\{\|\sum_{j=0}^n T^jx\|:x\in C, \|x\|=1,n\in\NN\} =\infty$. 

Let $k\in\NN$. Find $n_k$ and $x_k\in C$ such that $\|x_k\|=1$ and $\|\sum_{j=0}^{n_k} T^jx_k\|>k$.
Find $t_k\in(1,1+k^{-1})$ such that
$t_k^{-n_k-1}> 1/2$. Find $r_k>n_k$ such that $\frac{\|T^{r_k}x_k\|}{t_k^{r_k}}<1$. Set $y_k:=\sum_{j=0}^{r_k}\frac{T^jx_k}{t_k^{j+1}}$. Then $M\|y_k\|\ge
\|t_k^{-n_k-1}\sum_{j=0}^{n_k} T^jx_k \|\ge k/2$,  where $M$ is the constant from the definition of a normal cone.

Set $u_k=\frac{y_k}{\|y_k\|}$. Then $\|u_k\|=1$. Since $T$ is additive, positively homogeneous and Lipschitz with a  Lipschitz constant $L$, we have
$$
\|Tu_k-u_k\|\le
\|Tu_k-t_ku_k\|+(t_k-1)\|u_k\|=
\|y_k\|^{-1}\Bigl\| T\left(\sum_{j=0}^{r_k}\frac{T^{j}x_k}{t_k^{j+1}}\right) -\sum_{j=0}^{r_k}\frac{T^{j}x_k}{t_k^{j}}\Bigr\|+ (t_k-1) $$
%$$\le \|y_k\|^{-1}\left (\Bigl\| T\left(\sum_{j=0}^{r_k}\frac{T^{j}x_k}{t_k^{j+1}}\right) -\sum_{j=1}^{r_k}\frac{T^{j}x_k}{t_k^{j}}\Bigr\| +1 \right)+ (t_k-1)$$
%\be
$$\le  \|y_k\|^{-1} \left (\Bigl\| T\left(\sum_{j=0}^{r_k}\frac{T^{j}x_k}{t_k^{j+1}}\right) -T \left(\sum_{j=1}^{r_k}\frac{T^{j-1}x_k}{t_k^{j}}\right) \Bigr\| +1 \right)+ (t_k-1) $$
%\label{error1}
%\ee
$$\le \frac{2M}{k} \left ( L\Bigl\| \sum_{j=0}^{r_k}\frac{T^{j}x_k}{t_k^{j+1}} -\sum_{j=1}^{r_k}\frac{T^{j-1}x_k}{t_k^{j}}\Bigr\| +1 \right)+ k^{-1} =\frac{2M}{k} \left (L\Bigl\| \frac{T^{r_k}x_k}{t_k^{r_k+1}} \Bigr\| +1 \right)+ k^{-1}$$
$$<
\frac{2M (L+1) +1}{k}\to 0
$$
as $k\to\infty$.
Hence $1\in\sigma_{ap}(T)$.

\medskip

\noindent{\bf II.}
Suppose that $M_0:=\sup\{\|\sum_{j=0}^nT^jx\|:x\in C,\|x\|=1,n\in\NN\}<\infty$.

Let $\e>0$ and $K=\|T\|$. Let $n\in\NN$ be the number constructed in Lemma \ref{numbers}.
We have $\|T^n\|\ge r(T^n)=r(T)^n\ge 1$, so there exists $x\in C$ such that $\|x\|=1$ and $\|T^nx\|\ge 1/2$. Write $\al_k=\|T^kx\|$.

Let $m\in\NN$ and $\beta_k\quad(k\ge 0)$ be the numbers constructed in Lemma \ref{numbers} , i.e., $\beta_k\ge 0$, $\beta_0\le\e$, $|\beta_{k+1}-\beta_k|\le 2\e$, $ \beta _k < \beta _{k+1} $  for  all  $k=0, 1, \ldots , m-1$,
$ \beta _k > \beta _{k+1} $ for  all $ k=m, m+1, \ldots $,
$\al_k\beta_k\le 1$, $\al_m\beta_m=1$, $\lim_{k\to\infty}\al_k\beta_{k+1}=0$.

Fix $r>m$
such that $\beta_{r}<\e\|T^{r-1}x\|^{-1}$.

Set $u=\sum_{k=0}^r\beta_k T^kx$. Since $u\ge \beta_m{T^mx}$, we have $M\|u\|\ge \beta_m{\|T^mx\|}=1$.
We have
$$
\|Tu-u\|
=
\Bigl\| T\left(\sum_{k=0}^r\beta_k{T^{k}x}\right)-
\sum_{k=0}^r\beta_k{T^kx}\Bigr\| 
$$
$$\le \Bigl\| T\left(\sum_{k=0}^r\beta_k{T^{k}x}\right)-
\sum_{k=1}^r\beta_k{T^kx}\Bigr\| + \Bigl\|\beta_0{x}\Bigr\|
$$
%\be
$$\le  L \Bigl\| \sum_{k=0}^r\beta_k{T^{k}x}-\sum_{k=1}^r\beta_k{T^{k-1}x}\Bigr\| + \e$$
%\label{error2}
%\ee
$$
\le
 L \left (\Bigl\|\beta_r{T^{r}x}\Bigr\|+ \Bigl\|\sum_{k=0}^{m-1}{T^kx}(\beta_{k+1}-\beta_{k})\Bigr\|+
\Bigl\|\sum_{k=m}^{r-1}{T^kx}(\beta_{k}-\beta_{k+1})\Bigr\| \right)+ \e
$$
$$\le
 L(K\e+4\e M_0 M)+\e\to 0
$$
as $\e\to 0$.
Hence $1\in\sigma_{ap}(T)$.
\end{proof}
Similarly as Theorem \ref{appl_to_matrices}, the following more general result is proved in a similar way. 
\begin{theorem} Let $X$  be a normed space, $C\subset X$ a non-zero normal wedge and let $T:C\to C$ be positively homogeneous, additive and Lipschitz.
Let $C'\subset C$ be a bounded subset satisfying
$\|T^n\|=\sup\{\|T^nx\|: x\in C'\}$ for all $n$.
Then
$$
[\sup\{r_x(T):x\in C'\} , r(T) ]  \subset \sigma_{ap}(T)
$$
and $r(T)= \max \{t : t \in \sigma_{ap}(T)\}$.
%The proof is the same and this version implies the statement for infinite non-negative %matrices (for $C'=\{e_j: j=1,2,\dots\}$).
\label{appl_to_matrices_lin}
\end{theorem}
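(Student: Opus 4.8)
The plan is to reproduce the proof of Theorem~\ref{main_normal} almost verbatim, the only new ingredient being that the single auxiliary vector on which the whole construction rests is now selected from $C'$ rather than from $C$. As there, I would first invoke Lemma~\ref{spdominate} to record $r(T)\ge t$ for every $t\in\sigma_{ap}(T)$, and then normalize to the case $t=1$. The degenerate case $t=0$ is settled exactly as in Theorem~\ref{main_lattice}: here $\sup\{r_x(T):x\in C'\}=0$, so for a nonzero $x\in C'$ one has $\|T^kx\|^{1/k}\to0$, which produces a unit vector $u=T^kx/\|T^kx\|$ with $\|Tu\|$ arbitrarily small, whence $0\in\sigma_{ap}(T)$; if no such $x$ exists then $\|T\|=0$ and the claim is trivial.

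I would then keep the same dichotomy, splitting on whether the quantity $M_0:=\sup\{\|\sum_{j=0}^nT^jx\|:x\in C,\ \|x\|=1,\ n\in\NN\}$ is infinite or finite. Case~I is untouched: one builds, for a sequence $t_k\downarrow1$, the vectors $y_k=\sum_{j=0}^{r_k}t_k^{-(j+1)}T^jx_k$ out of unit vectors $x_k\in C$ realizing $\|\sum_{j=0}^{n_k}T^jx_k\|>k$, using additivity, positive homogeneity, Lipschitzness and normality of $C$ to force $\|Tu_k-u_k\|\to0$. Since this step never refers to $C'$ nor to the bound $r_x(T)\le1$, it yields $1\in\sigma_{ap}(T)$ word for word.

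The subset $C'$ intervenes only in the finite Case~II. Where Theorem~\ref{main_normal} picks a unit vector $x$ with $\|T^nx\|\ge1/2$ from the inequality $\|T^n\|\ge r(T)^n\ge1$, I would instead use the hypothesis $\|T^n\|=\sup\{\|T^nx\|:x\in C'\}$ to draw this vector from $C'$, passing to the unit direction $x/\|x\|$. This single substitution suffices, because the growth hypothesis $\limsup_k\|T^kx\|^{1/k}\le1$ required by Lemma~\ref{numbers} is now automatic: for $x\in C'$ one has $r_x(T)\le\sup\{r_y(T):y\in C'\}\le t=1$, and $r_x$ is scale invariant. Feeding $\alpha_k=\|T^kx\|$ into Lemma~\ref{numbers} to obtain $m$ and the numbers $\beta_k$, the vector $u=\sum_{k=0}^r\beta_kT^kx$ and the telescoping estimate proceed unchanged: after applying additivity and Lipschitzness one splits the sum at $k=m$ and bounds $\|\sum(\beta_{k+1}-\beta_k)T^kx\|\le2\e M\|\sum T^kx\|\le2\e MM_0$ using normality, concluding $\|Tu-u\|\to0$ while $M\|u\|\ge\alpha_m\beta_m=1$, hence $1\in\sigma_{ap}(T)$.

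The one spot that needs care, and the point I expect to be the only genuine obstacle, is reconciling Lemma~\ref{numbers} (stated for sequences with $\alpha_n\ge1/2$ and $\alpha_k\le K^k$) with the fact that $C'$ is merely bounded rather than a set of unit vectors. Normalizing the chosen $x$ preserves $\alpha_k=\|T^kx\|\le\|T^k\|\le K^k$, but may only guarantee $\alpha_n\ge1/(2R)$ with $R=\sup_{x\in C'}\|x\|$. This is harmless, since the constant $1/2$ plays no structural role in Lemma~\ref{numbers}: replacing it by any fixed $c>0$ merely enlarges the choice of $m_0$ (one requires $(1+\e)^{m_0}>c^{-1}\e^{-1}$) and leaves the conclusions $\alpha_m\beta_m=1$ and $\beta_0\le\e$ intact. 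Finally, the last assertion follows immediately: taking $t=r(T)$, which is legitimate since $\sup\{r_x(T):x\in C'\}\le r(T)$ always holds, gives $r(T)\in\sigma_{ap}(T)$, and Lemma~\ref{spdominate} bounds every element of $\sigma_{ap}(T)$ by $r(T)$, so $r(T)=\max\{t:t\in\sigma_{ap}(T)\}$.
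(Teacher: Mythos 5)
Your overall plan is the right one (the paper itself obtains this theorem by repeating the proof of Theorem \ref{main_normal} with the distinguished vectors drawn from $C'$), and your treatment of Case II is correct, including the observation that the constant $1/2$ in Lemma \ref{numbers} plays no structural role and can be replaced by any fixed $c>0$ to accommodate the normalization $\alpha_n\ge 1/(2R)$. However, your claim that Case I goes through ``word for word'' because it ``never refers to $C'$ nor to the bound $r_x(T)\le 1$'' is wrong, and this is a genuine gap. In Case I one must produce, for each unit vector $x_k$ realizing $\|\sum_{j=0}^{n_k}T^jx_k\|>k$, a truncation index $r_k>n_k$ with $\|T^{r_k}x_k\|/t_k^{r_k}<1$; the existence of such an $r_k$ is precisely what the hypothesis $r_{x_k}(T)\le t=1<t_k$ guarantees in Theorem \ref{main_normal}. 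Under the present weaker hypotheses that bound is available only for vectors of $C'$, while your $x_k$ range over all of $C$. Concretely, take $X=\ell^\infty$, $C=\ell^\infty_+$, $T=2S$ with $S$ the backward shift, and $C'=\{e_j:j\in\NN\}$: then $\|T^n\|=2^n=\sup_j\|T^ne_j\|$, $\sup_j r_{e_j}(T)=0$ and $r(T)=2$, so $t=1$ is admissible, and the conclusion does hold (the vectors $(1,s/2,(s/2)^2,\dots)$ are eigenvectors for every $s\in[0,2]$). But $\sup\{\|\sum_{j=0}^nT^jx\|:x\in C,\|x\|=1,n\in\NN\}=\infty$ (take $x=(1,1,1,\dots)$), so your Case I is the operative one, and for $x_k=(1,1,1,\dots)$ one has $\|T^rx_k\|=2^r>t_k^r$ for every $r$: no index $r_k$ exists and your construction cannot proceed.

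The repair is to run the entire dichotomy over the normalized elements of $C'$ rather than over the unit sphere of $C$: set $D=\{x/\|x\|:x\in C',x\ne 0\}$ and $M_0:=\sup\{\|\sum_{j=0}^nT^jx\|:x\in D,\ n\in\NN\}$. If $M_0=\infty$, choose the $x_k$ from $D$; since they come from $C'$, $r_{x_k}(T)\le t=1<t_k$, so $r_k$ exists and the rest of Case I is unchanged (in the example above $D$ does witness this case, since $\|\sum_{j=0}^{i-1}T^je_i\|=2^{i-1}$). If $M_0<\infty$, your Case II applies verbatim: the only vector fed into the telescoping estimate is the normalized element of $C'$ supplied by the hypothesis $\|T^n\|=\sup\{\|T^nx\|:x\in C'\}$, and the finiteness of $M_0$ over $D$ is exactly what is needed to bound $\|\sum_k(\beta_{k+1}-\beta_k)T^kx\|$ for that vector.
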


\begin{remark}{\rm In fact, in the setting of Theorems \ref{main_normal} and \ref{appl_to_matrices_lin} the map $T$ extends to a bounded linear operator on the normed space $C-C$. Note that in general $C-C$  is not a lattice. % and we do not assume anything  about the wedge $C$. 
Moreover, the obtained approximate eigenvectors are in $C$.
}
\end{remark}

\begin{corollary}  Let $X$ be a normed space with a non-zero normal wedge $C\subset X$. %such that $C\ne\{0\}$. 
If $T:C\to C$ is positively homogeneous, additive and Lipschitz, then
$r_x(T)\in\sigma_{ap}(T)$ for each $x\in C$.
\end{corollary}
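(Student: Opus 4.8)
The plan is to imitate the proof of Corollary~\ref{local_lattice}, replacing the max-cone generated by the orbit of $x$ with the \emph{convex} sub-wedge generated by it; here additivity of $T$ plays the role that supremum-preservation played in the lattice setting. Fix $x\in C$, $x\ne 0$, and set
$$
K=\Bigl\{\sum_{j=0}^n \alpha_j T^jx : n\in\NN,\ \alpha_j\ge 0\ (j=0,1,\dots,n)\Bigr\}\subset C.
$$
First I would record the structural facts: $K$ is a non-zero convex cone with $x\in K$, and $TK\subset K$, since additivity and positive homogeneity give $T\bigl(\sum_j\alpha_j T^jx\bigr)=\sum_j\alpha_j T^{j+1}x\in K$. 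Being a subset of the normal wedge $C$, $K$ is itself normal with the same constant $M$, and $T|_K:K\to K$ is again positively homogeneous, additive and Lipschitz with the same Lipschitz constant.

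Next I would show $\sup\{r_y(T):y\in K\}=r_x(T)$. For $y=\sum_{j=0}^n\alpha_j T^jx\in K$ and $k\in\NN$, additivity and homogeneity yield $T^ky=\sum_{j=0}^n\alpha_j T^{k+j}x$, so
$$
\|T^ky\|\le \sum_{j=0}^n\alpha_j\|T^{k+j}x\|\le \max_j\alpha_j\cdot(n+1)\cdot\max\{\|T^{k+j}x\|:0\le j\le n\}.
$$
Taking $k$-th roots and letting $k\to\infty$ gives $r_y(T)\le r_x(T)$, exactly as in Corollary~\ref{local_lattice}; the reverse inequality is clear from $x\in K$. Since the whole orbit of $x$ lies in $K$, we also have $r_x(T|_K)=r_x(T)\le r(T|_K)$.

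Finally I would apply Theorem~\ref{main_normal} to $T|_K$ on the non-zero normal wedge $K$. With $t=r_x(T)$ we have $\sup\{r_y(T|_K):y\in K\}=r_x(T)\le r(T|_K)$, so $t$ lies in the admissible interval and hence $r_x(T)\in\sigma_{ap}(T|_K)$. Every unit vector of $K$ is a unit vector of $C$ on which $T|_K$ coincides with $T$, so $\sigma_{ap}(T|_K)\subset\sigma_{ap}(T)$, and the desired conclusion $r_x(T)\in\sigma_{ap}(T)$ follows.

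Structurally this is identical to Corollary~\ref{local_lattice}, so I expect no serious obstacle. The only points needing care are the observation that it is \emph{additivity} (not supremum-preservation) that makes each $T^k$ act linearly on $K$, and the verification that normality of $C$ is inherited by the sub-wedge $K$, which is precisely what is required to invoke Theorem~\ref{main_normal}.
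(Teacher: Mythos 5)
Your proposal is correct and follows essentially the same route as the paper: the paper also forms the $T$-invariant wedge $K=\{\sum_{j=0}^n\al_jT^jx : n\in\NN,\ \al_j\ge 0\}$, derives $r_y(T|_K)\le r_x(T)\le r(T|_K)$ from the same orbit estimate, and then applies Theorem~\ref{main_normal} to $T|_K$ to get $r_x(T)\in\sigma_{ap}(T|_K)\subset\sigma_{ap}(T)$. Your additional verifications (invariance of $K$, inherited normality, the role of additivity) are exactly the points the paper leaves implicit.
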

\begin{proof}
Let $x\in C$ and let $K$ be the smallest wedge generated by $x$ invariant for $T$, i.e., $K=\{\sum_{j=0}^n\al_jT^jx:n\in\NN,\al_0,\dots,\al_n\ge 0\}$.

Let $y=\sum_{j=0}^n\al_jT^jx\in K$. Then $\|T^ky\|\le\sum_{j=0}^n\al_j\|T^{k+j}x\|$ and it is easy to see that $r_y(T|_{K})\le r_x(T) \le r(T|_{K}) $. By Theorem \ref{main_normal}, $r_x(T)\in\sigma_{ap}(T|_{K})\subset\sigma_{ap}(T)$.
\end{proof}

\begin{remark}{\rm Let $X$, $C$ and $T$ be as in Theorem  \ref{appl_to_matrices_lin}. Under additional compactness type assumptions from Remark \ref{KrRut1} or Theorem \ref{MPrho<r}, Theorem  \ref{appl_to_matrices_lin} implies Krein-Rutman type results, i.e., 
 the existence of an eigenvector $x\in C$, $x\neq 0$ such that $Tx=tx$. We omit the details.
 As is well-known and also illustrated by Examples \ref{explicit} and \ref{leftshift}, such additional assumptions are necessary.
}
\end{remark}

\bigskip

The following result is essentially known, see e.g.  \cite[Theorem 3.3]{MN10}, \cite[Theorem 2.1]{Gr15} and  \cite[Theorem 2.2 and remarks after it]{MN02}. The sketch of the proof is included for the sake of completeness.
% (compare also with remarks after it) and our proof, which is  similar to the proof of Proposition \ref{attained}, is simpler.
 %and it is omitted 
%(compare with  \cite[Theorem 2.2 and remark after it]{MN02}).

\begin{proposition} Let $X$ be a normed %Banach %ordered 
space and $C\subset X$ a non-zero normal complete wedge. %$C\subset X$ %such that $C\ne\{0\}$. 
Let $T:C\to C$ be  bounded and positively homogeneous. 
If, in addition, 

(i) $T$ is monotone on $C$ or

(ii) $T$ is continuous and additive on $C$,

\noindent then
there exists $x\in C$ such that $r_x(T)=r(T)$.
\label{att2}
\end{proposition}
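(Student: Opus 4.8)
The plan is to mimic the proof of Proposition \ref{attained}, replacing the lattice supremum (which is unavailable in a general wedge) by a norm-convergent series whose limit lies in $C$ thanks to the completeness of the wedge. As usual the statement is trivial when $r(T)=0$, since then $r_x(T)\le r(T)=0$ forces $r_x(T)=0=r(T)$ for every $x\in C$; so I may assume $r(T)=1$. Using $r(T^k)=r(T)^k$ together with submultiplicativity of the norm one gets $\|T^k\|\ge r(T^k)=1$ for every $k$, so for each $k$ I can choose $x_k\in C$ with $\|x_k\|=1$ and $\|T^kx_k\|\ge 1/2$.

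The single vector I would build is $x:=\sum_{k=1}^\infty k^{-2}x_k$. First I would check that this series converges in $C$: its partial sums lie in the wedge $C$ (finite sums of elements of a convex cone), and they form a Cauchy sequence because $\|\sum_{k=N'+1}^N k^{-2}x_k\|\le\sum_{k=N'+1}^N k^{-2}\to 0$; since $C$ is complete, the limit $x$ belongs to $C$. This is exactly where completeness of $C$ (rather than of $X$) enters. The goal is then to extract the domination $k^{-2}T^kx_k\le T^kx$ in the preorder induced by $C$, for every $k$.

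In case (i) this domination is immediate: $x-k^{-2}x_k=\sum_{j\ne k}j^{-2}x_j\in C$, so $x\ge k^{-2}x_k$, and monotonicity together with positive homogeneity gives $T^kx\ge k^{-2}T^kx_k$. In case (ii) I would instead use that $T^k$ is additive, positively homogeneous and continuous, hence $T^kx=\sum_{j=1}^\infty j^{-2}T^kx_j$ (apply $T^k$ to the partial sums, which it splits by finite additivity, and pass to the limit by continuity); then $T^kx-k^{-2}T^kx_k=\sum_{j\ne k}j^{-2}T^kx_j$ is a norm limit of finite sums of elements of $C$, hence lies in the closed (because complete) wedge $C$, which again yields $k^{-2}T^kx_k\le T^kx$. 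Either way, applying the normality constant $M$ to the inequality $k^{-2}T^kx_k\le T^kx$ gives $\|T^kx\|\ge M^{-1}k^{-2}\|T^kx_k\|\ge (2Mk^2)^{-1}$, and therefore $r_x(T)=\limsup_{k\to\infty}\|T^kx\|^{1/k}\ge\limsup_{k\to\infty}(2Mk^2)^{-1/k}=1=r(T)$. Since $r_x(T)\le r(T)$ always holds, this forces $r_x(T)=r(T)$.

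I expect the only genuinely delicate point to be case (ii): one must be careful that additivity is only finite additivity, so the identity $T^kx=\sum_j j^{-2}T^kx_j$ has to be obtained by continuity from the finite partial sums, and the membership of the tail $\sum_{j\ne k}j^{-2}T^kx_j$ in $C$ must be justified as a limit of finite sums inside the closed wedge $C$. Once the domination $k^{-2}T^kx_k\le T^kx$ is secured in both cases, the remainder is precisely the normality-plus-normalization estimate already used in Proposition \ref{attained}.
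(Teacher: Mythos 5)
Your proposal is correct and follows essentially the same route as the paper: the paper's (sketched) proof also replaces the lattice supremum of Proposition \ref{attained} by the series $x=\sum_{k=1}^\infty k^{-2}x_k$, which lies in $C$ by completeness, derives $T^kx\ge k^{-2}T^kx_k$ from either monotonicity or continuity-plus-additivity, and concludes via the normality estimate. Your write-up merely fills in the details the paper leaves implicit (Cauchy partial sums, closedness of the complete wedge, and the limit argument for case (ii)), all of which are correct.
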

\begin{proof}
Without loss of generality we assume that $r(T)=1$ and for each $k\in\NN$ choose $x_k\in C$ as in the proof of Proposition \ref{attained}. Define $x=\sum_{k=1}^\infty k^{-2}x_k \in C$. 
If $T$ satisfies (i) or (ii), it follows that $Tx \ge  k^{-2}Tx_k$. Conclude the proof as in the proof of Proposition \ref{attained}. 
%Choose 
%The statement is trivial if $r(T)=0$. So without loss of generality we may assume that $r(T)=1$.
%Then for each $k\in\NN$ we have $\|T^k\|\ge r(T^k)=1$. Find $x_k\in C$ such that $\|x_k\|=1$ and $\|T^kx_k\|\ge 1/2$. Set  $x=\bigvee_{k=1}^\infty k^{-2}x_k$. Then $x\in C$ and 
%$
%M\|T^kx\|\ge k^{-2}\|T^kx_k\|\ge \frac{1}{2k^2}.
%$$
%So $r_x(T)=\limsup_k\|T^kx\|^{1/k}\ge 1=r(T)$.
%Hence $r_x(T)=r(T)$.
\end{proof}
\begin{remark}{\rm Similarly as in \cite[Remark on p.12]{MN02}, a slight generalization of Proposition  \ref{att2} is possible.
Namely, if $C_1 \subset C$ are given wedges and $T$ satisfies the conditions of  Theorem \ref{att2} with respect to the wedge $C$ as stated. Additionally, we assume that $TC_1 \subset C_1$, where the wedge  $C_1$ is complete.  Then there exists $x \in C_1$ that equals the Bonsall cone spectral radius of $T$ with respect to $C_1$.

As pointed out (and applied) in \cite{MN02} and \cite[Theorem 3.3]{MN10}, the main reason for this generalization is that it may happen that a non-linear map is   monotone with respect  to the (pre)ordering $\le _{C}$, but it is not monotone with respect  to the (pre)ordering $\le _{C_1}$ (see, for instance, \cite{LN08} and the "renormalization operators" which occur in discussing diffusion on fractals).
}
\end{remark}

\vspace{3mm}

\baselineskip 5mm

{\it Acknowledgments.}
%The research was supported by a joint Czech-Slovene grant No.091101 and BI-CZ/11-12-006. 
The first author was supported by grants No. 14-07880S of GA
CR and RVO: 67985840.

The second author was supported in part by the JESH grant of the Austrian Academy of Sciences and by grant P1-0222 of the Slovenian Research Agency. The second author
 thanks  Marko Kandi\'{c} and Roman Drnov\v{s}ek for useful comments and to his collegues and staff at TU Graz for their hospitality during his stay in Austria. \\

\vspace{2mm}

\noindent
Vladimir M\"uller\\
Institute of Mathematics, Czech Academy of Sciences \\
\v{Z}itna 25 \\
115 67 Prague, Czech Republic\\
email: muller@math.cas.cz

\bigskip

\noindent
Aljo\v sa Peperko \\
Faculty of Mechanical Engineering \\
University of Ljubljana \\
A\v{s}ker\v{c}eva 6\\
SI-1000 Ljubljana, Slovenia\\
{\it and} \\
Institute of Mathematics, Physics and Mechanics \\
%University of Ljubljana \\
Jadranska 19 \\
SI-1000 Ljubljana, Slovenia \\
e-mails : aljosa.peperko@fmf.uni-lj.si , aljosa.peperko@fs.uni-lj.si

\end{document}